\newcommand{\ben}{\begin{enumerate}}
\newcommand{\een}{\end{enumerate}}
\newcommand{\bea}{\begin{eqnarray}}
\newcommand{\ba}{\begin{array}}
\newcommand{\bean}{\begin{eqnarray*}}
\newcommand{\ea}{\end{array}}
\newcommand{\eea}{\end{eqnarray}}
\newcommand{\eean}{\end{eqnarray*}}
\newcommand{\beq}{\begin{equation}}
\newcommand{\eeq}{\end{equation}}
\newcommand{\bthm}{\begin{thm}}
\newcommand{\ethm}{\end{thm}}
\newcommand{\blem}{\begin{lem}}
\newcommand{\elem}{\end{lem}}
\newcommand{\bprop}{\begin{prop}}
\newcommand{\eprop}{\end{prop}}
\newcommand{\bproperty}{\begin{property}}
\newcommand{\eproperty}{\end{property}}
\newcommand{\bcor}{\begin{cor}}
\newcommand{\ecor}{\end{cor}}
\newcommand{\bdfn}{\begin{defin}}
\newcommand{\edfn}{\end{defin}}
\newcommand{\brem}{\begin{rem}}
\newcommand{\erem}{\end{rem}}
\newcommand{\bpf}{\begin{proof}}
\newcommand{\epf}{\end{proof}}
\newcommand{\bfact}{\begin{fact}}
\newcommand{\efact}{\end{fact}}
\newcommand{\nl}{\newline}
\newtheorem{thm}{Theorem}[section]
\newtheorem{cor}[thm]{Corollary}
\newtheorem{lem}[thm]{Lemma}
\newtheorem{prop}[thm]{Proposition}
\newtheorem{property}[thm]{Property}
\newtheorem{fact}[thm]{Fact}
\theoremstyle{definition}
\newtheorem{defin}[thm]{Definition}
\newtheorem{rem}[thm]{Remark}
\numberwithin{equation}{section}
             \def\cB{\mathcal B}
\def\cS{\mathcal S}
\def\N{{\mathbb N}}            \def\Z{{\mathbb Z}}      \def\R{{\mathbb R}}
\def\1{1\!\!1}
\def\and{\text{ and }}
\def\dist{\text{{\rm dist}}}
\def\h{{\text h}}             
\def\hmu{\h_\mu}           
\def\H{\text{{\rm H}}}     \def\HD{\text{{\rm HD}}}   
         \def\PD{\text{\rm {PD}}}  
\def\Int{\text{{\rm Int}}}  
               \def\e{\varepsilon}          
\def\g{\gamma}                           \def\l{\lambda}
\def\La{\Lambda}              \def\om{\omega}           \def\Om{\Omega}
               \def\sg{\sigma}
               \def\th{\theta}
\def\bi{\bigcap}              \def\bu{\bigcup}
\def\({\bigl(}                \def\){\bigr)}
\def\lt{\left}                \def\rt{\right}
\def\ld{\ldots}                        \def\^{\tilde}
\def\es{\emptyset}            \def\sms{\setminus}
\def\sbt{\subset}             \def\spt{\supset}
           \def\downto{\searrow}
\def\sp{\medskip}             \def\fr{\noindent}        \def\nl{\newline}
\def\ov{\overline}            \def\un{\underline}
\def\vp{\varepsilon}
\def\ess{{\rm ess}}           
\def\om{\omega}
\def\J{\mathcal J}
\begin{document}

%%%%% To ease editing, for IMPAN journals add:

\baselineskip=17pt

%%%%%%%%%%%

%% In the running head, replace first names by initials 
%% and give an abbreviation of the title.

\title{Random countable iterated function systems with overlaps and applications}

\author[Eugen Mihailescu]{Eugen Mihailescu}
\address{Eugen Mihailescu\nl 
Institute of Mathematics of the Romanian Academy\\
P.O Box 1-764\\
RO 014700, Bucharest, 
Romania, \ \  and\nl
Institut des Hautes \'Etudes Sci\'entifiques, Le Bois-Marie 35, route de Chartres 91440, Bures-sur-Yvette, France.
}
\email{Eugen.Mihailescu@imar.ro\newline \hspace*{0.1cm}
Web: www.imar.ro/$\sim$mihailes}

\author[Mariusz Urba\'nski]{Mariusz Urba\'nski}

\address{Mariusz Urba\'nski \nl Department of Mathematics\\University of North Texas\\
  Denton, TX 76203-1430, USA.}  
\email{urbanski@unt.edu\newline \hspace*{0.1cm}
Web: http://www.math.unt.edu/$\sim$urbanski}
\date{}

\begin{abstract}
We study invariant measures for random countable (finite or infinite) conformal iterated function systems (IFS) with arbitrary overlaps. We do not assume any type of separation condition. We prove, under a mild assumption of finite entropy, the  dimensional exactness of the projections of invariant measures from the shift space, and we give a formula for their dimension, in the context of random infinite conformal iterated function systems with overlaps. There exist many differences between our case and the finite deterministic case studied in \cite{FH}, and we introduce new methods specific to the infinite and random case. We apply our results towards a problem related to a conjecture of Lyons about random continued fractions (\cite{L}), and show that for Lebesgue-almost all parameters $\lambda>0$, the invariant measure $\nu_\lambda$ is exact dimensional. The finite IFS determining these continued fractions is not hyperbolic, but we can associate to it a random infinite IFS of contractions which have overlaps. We study then also other large classes of random countable iterated function systems with overlaps, namely: a) several types of random iterated function systems related to Kahane-Salem sets; and b) randomized infinite IFS in the plane which have  uniformly bounded number of disc overlaps. For all the above classes, we find lower and upper estimates for the pointwise (and Hausdorff, packing) dimensions of the invariant measures. 
\end{abstract}

\subjclass[2010]{37H15, 37F40, 28A80,  28D05, 37C45, 37A35.}
\keywords{Random iterated function systems, countable alphabet, infinite systems with overlaps, invariant measures, Lyapunov exponents, random continued fractions, Kahane-Salem sets.}
\maketitle

\section{Introduction}

\fr Let $(X,\rho)$ be a metric space. A finite Borel measure $\mu$ on $X$ is called \textit{exact dimensional} if 
\beq\label{120140109}
d_\mu(x):=\lim_{r\to 0}\frac{\log\mu(B(x,r))}{\log r} 
\eeq
exists for $\mu$-a.e. $x\in X$ and is equal to a common value  denoted by $d_\mu$. Exact dimensionality of the measure $\mu$ has profound geometric consequences (for eg \cite{Fa}, \cite{FH},  \cite{Mattila}, \cite{Pe}, \cite{pubook}).

\fr The question of which measures are exact dimensional attracted the attention at least since    
the seminal paper of L.S Young \cite{lsy1}, where it was proved  a formula for the Hausdorff dimension
of a hyperbolic measure invariant under a surface diffeomorphism, formula involving the Lyapunov exponents of the measure. As a
consequence of that proof, she established what (now) is called
the dimensional exactness of such measures. The topic of dimensional
exactness was then pursued by the breakthrough result of Barreira,
Pesin, and Schmeling who proved in \cite{BPS} the Eckmann--Ruelle conjecture asserting 
that any hyperbolic measure invariant under smooth diffeomorphisms is exact dimensional (\cite{ER}). Dimensional exactness, without using these words, was also established in the book \cite{gdms} for all projected invariant measures 
with finite entropy, in the setting of conformal iterated function systems with
countable alphabet which  satisfy the Open Set
Condition (OSC);  in particular for all projected invariant measures if the
alphabet is finite and we have OSC. 
The next difficult task was the
case of a conformal iterated function system with \textit{overlaps}, i.e. without
assuming the Open Set Condition. For the case of iterated function systems with finite alphabet and having overlaps, this was done by Feng and Hu in \cite{FH}. 
Overlaps in iterated function systems (IFS) are challenging. Our goal in the present paper is to extend the above mentioned paper of Feng and Hu, in \textit{two directions}. Firstly, by allowing the alphabet of the system to be \textit{countable infinite}; and secondly, to consider \textit{random} iterated function systems rather than deterministic IFS. Random IFS's contain deterministic IFS as a special case. 

We  prove under a mild assumption of finite conditional entropy, the dimensional exactness of the projections of invariant measures from the shift space, in the context of random conformal iterated function systems with countable alphabet and arbitrary overlaps. We thus deal simultaneously with two 
new, and qualitatively different issues: infinite alphabet rather than finite, and random, rather
than deterministic choice of contractions. We thus need new ideas and techniques appropriate
to the context of infinite alphabet and randomness. 
Randomization allows to have a unitary setting to study limit sets and measures, in a family of systems for \textit{generic} parameter values, which proves useful in cases when studying individual systems is difficult. Moreover, randomization allows us to obtain new types of fractal sets defined by \textit{series of random variables}. 

\

Our \textbf{main results} are the following: \ in Theorem \ref{l3ie27} we prove \textit{dimensional exactness}, and provide a \textit{formula} for the dimension of typical projection measures, by employing a random projectional entropy and the Lyapunov exponents of the measure with respect to the random countable IFS with overlaps. Thus, we show that the pointwise, Hausdorff and packing dimensions of such a typical projection measure, coincide. 

Then, in Theorem \ref{ab} we give  lower and upper bounds for the random projectional entropy of a measure. This allows us consequently, to obtain estimates for the pointwise dimension (and thus Hausdorff dimension, and packing dimension) of projection measures.  

In Section 3, we introduce and investigate several \textit{concrete classes} of random countable iterated systems with overlaps. Firstly, we will give several ways to randomize countable IFS related to \textit{generalizations of Kahane-Salem sets} (\cite{KS}),  and infinite convolutions of Bernoulli distributions.  Some of these are fractal sets obtained from series of random variables, namely sets $
\lt\{\pm 1 + \mathop{\sum}\limits_{i\ge 1} \mathop{\sum}\limits_{(j, k) \in Z_i}\pm \rho_1^k\rho_2^j\rt\}
$,
where for any pair of positive integers $(j, k) \in Z_i$ we have $j+k = i, \ i \ge 1$, and where the sets $Z_i$ are prescribed by the parameter $\lambda \in \{1, 2\}^\Z$, and the signs $\pm$ are arbitrary. We obtain also another type of random fractal sets, defined  in the following way:
 $J_\lambda= \{\pm 1 \pm \lambda \rho_{i_1} \pm \lambda^2 \rho_{i_1}\rho_{i_2} \pm \ldots,  \ \text{for all sequences of positive integers} \  \omega = (i_1, i_2, \ldots) \in E^\infty\}$. We provide estimates for the pointwise dimensions of invariant measures on these random fractals.

We will then study IFS related to \textit{random continued fractions}, and will shed new light on a problem of Lyons (see \cite{L}, \cite{SSU}). These are continued fractions of type $[1, X_1, 1, X_2, \ldots] = \frac{1}{1+ \frac{1}{X_1 + \frac{1}{1+\frac{1}{X_2 +\ldots}}}}$, where the random variables $X_i, i \ge 1$ take two values $0, \alpha>0$, each with equal probability; the distribution of this random continued fraction is denoted by $\nu_\alpha$. The above random continued fractions correspond to a parabolic IFS with overlaps, whose limit set contains an interval in certain cases. In \cite{SSU}, by using a transversality condition,  it was shown that for a certain interval of parameters $\alpha$, the invariant measure $\nu_\alpha$ is absolutely continuous;  also for other parameters $\nu_\alpha$ is singular.  In our current paper, we do not use  transversality. One can associate to the above finite  parabolic IFS, a random infinite hyperbolic IFS with overlaps.  In Theorem \ref{lyons} we will prove \textit{exact dimensionality} of the measure $\nu_\alpha$, for almost all values of the parameters $\alpha$. And moreover, we will give \textit{lower estimates} of the pointwise dimension (and Hausdorff dimension, packing dimension) of $\nu_\alpha$. Our method can be extended also to other types of random continued fractions, and associated infinite IFS with overlaps.

Moreover in Section 3,  we provide examples of random infinite conformal IFS with overlaps in the plane, with \textit{uniformly bounded preimage counting function}.  We study the projection measures on their limit sets, finding lower and upper bounds for their pointwise dimensions. 

\

In general, infinite IFS with overlaps \textbf{behave differently} than finite IFS with overlaps (for eg \cite{gdms}, \cite{Mo}, etc). In the infinite case, the limit set is not necessarily compact (by contrast to the finite IFS case), also the diameters of the sets $\phi_i(X)$ converge to 0, etc. In addition, for an infinite IFS $\cS$, the boundary at infinity $\partial_\infty(\cS)$ plays an important role, and we have to take into consideration whether an invariant probability gives measure zero (or not) to $\partial_\infty(\cS)$ (for eg \cite{gdms}).  
Even when OSC is satisfied, the Hausdorff dimension of the limit set is not always given as the zero of the pressure of a certain potential. However, a version of Bowen's formula for the Hausdorff dimension still exists; see \cite{gdms}. For example even when assuming OSC, and unlike in the finite alphabet case, the Hausdorff measure can vanish and the packing measure may become locally infinite at every point. In addition for infinite systems with overlaps we may have infinitely many basic sets overlapping at points in the limit set $J$, or the number of overlaps may be unbounded over $J$. Also, in \cite{RU} there was studied the thermodynamic formalism for random IFS which satisfy the Open Set Condition. 
 In \cite{MU}, we obtained lower estimates for the Hausdorff dimension of the limit set $J$ of a deterministic infinite IFS with overlaps, by using a pressure function and a preimage counting function for the overlaps at various points of $J$.

By extension, the case of \textit{random infinite IFS with overlaps} presents even more differences and new phenomena, when compared to the case of finite IFS with overlaps. For instance several proofs that used compactness type arguments cannot be applied to random infinite IFS with overlaps. 
We also have to impose certain conditions on the randomization process $\theta: \Lambda \to \Lambda$ and on the invariant measure $\mu$ on $\Lambda \times E^\infty$, etc. Therefore, we develop several new methods in our current paper.

We mention that several authors investigated the question of dimension for measures in the context of random dynamical systems or finite iterated function systems, although in different settings than us; for example \cite{Bogenschutz},  \cite{K}, \cite{MSU},  \cite{PSS}, \cite{PW}, \cite{Pe}, \cite{SU}. 

\

\section{Pointwise dimensions for self-measures of random countable IFS with overlaps. }
First let us recall some well-known geometric concepts, see for eg \cite{Fa}, \cite{Pe}, \cite{pubook}). For a finite Borel measure $\mu$ on a metric space $(X, \rho)$, we denote by $\un d_\mu(x)$ and $\ov d_\mu(x)$ the lower and, respectively the  upper limit when $r \to 0$,  of the ratio:
$$\frac{\log\mu(B(x,r))}{\log r}$$
These limits are called respectively, the \textit{lower, and upper pointwise dimensions} of $\mu$ at $x$, and are guaranteed to exist at every $x \in X$, in contrast to the limit in \eqref{120140109}.
If $\un d_\mu(x) = \ov d_\mu(x)$, then the common value is called the \textit{pointwise dimension} of $\mu$ at $x$, and is denoted by $d_\mu(x)$. 
Now one can define the following dimensions:
$$
\HD_\star(\mu):=\inf\{\HD(Y): \mu( Y)>0\} \ \text{and} \
\HD^\star(\mu)=\inf\{\HD(Y): \mu(X\sms Y)=0\}.
$$
In the case when $\HD_\star(\mu)=\HD^\star(\mu)$, this common
value is called the \textit{Hausdorff dimension of the measure} $\mu$ and is denoted by $\HD(\mu)$.

  Analogous concepts can be formulated for packing dimension,
with respective notation $\PD_\star(\mu)$, $\PD^\star(\mu)$. If $\PD(\mu)$ exists, it is called the \textit{packing dimension of the measure} $\mu$; in this case it can be proved that $\PD(\mu) = \sup\{s, \bar d_\mu(x)\ge s, \text{for} \ \mu-\text{almost all} \ x \}$.   

\sp\fr The first relations between these concepts are given in the following theorem (for example \cite{Fa}, \cite{pubook}):

\bthm[General properties of dimensions of measures on metric spaces]\label{t6.6.4.}

\

 \ (i) \ If $\mu$ is a finite Borel measure on a metric space $(X,\rho)$, then
$$
\HD_\star(\mu)=\ess\inf \un d_\mu,\  \HD^\star(\mu)=\ess\sup \un
d_\mu, \ \ \text{and} \ \ \
\PD_\star (\mu)=\ess\inf\ov d_\mu, \  \PD^\star (\mu)=\ess\sup\ov d_\mu
$$

\ (ii) \ If $\mu$ is an exact dimensional finite Borel measure on a metric space $(X,\rho)$, then
both its Hausdorff dimension and packing dimension are well-defined and 
$$
\HD(\mu)=\PD(\mu)=d_\mu.
$$

\ethm

Let now $X$ be a compact connected subset of $\R^q$, $q\ge 1$ with
$X=\ov{\Int(X)}$. Consider also $E$ to be a countable set  (either finite or infinite), called an \textit{alphabet}.

\bdfn\label{rccifs}
A \textit{random countable conformal iterated function system}
$$
\cS=(\th:\La\rightarrow\La,\{\l\mapsto\varphi_e^\l\}_{e\in E})
$$ 
is defined by an invertible ergodic measure-preserving transformation of a complete probability space $(\La,{\mathcal F},m)$, namely
$$
\th:(\La,{\mathcal F},m)\rightarrow(\La,{\mathcal F},m),
$$ 
 \textit{and} by a family of injective
conformal contractions on $X$, defined for each $e \in E$ and $\lambda \in \Lambda$,
$$
\varphi_e^\l:X\rightarrow X, 
$$ 
all of whose Lipschitz constants do not exceed a common value $0<s<1$. We in fact assume that there exists a bounded open connected set $W\sbt \R^q$ containg $X$, such that all maps $\phi_e^\l:X\to X$ extend confomally to (injective) maps from $W$ to $W$.  \ $\hfill\square$
\edfn

 We will denote in the sequel by $E^\infty$ the space of one-sided infinite sequences $\omega = (\omega_1, \omega_2, \ldots), \omega_i \in E, i \ge 1$; and by $E^*$ the set of all finite sequences $\tau = (\tau_1, \tau_2, \ldots, \tau_k), \tau_i \in E, 1 \le i \le k, k \ge 1$. We also have the usual shift map $\sigma: E^\infty \to E^\infty$.
 
In the sequel assume that  the contraction maps $\varphi_e^\l:W\to W$ satisfy the following \textit{Bounded Distortion Property} (BDP):

\bproperty[BDP]\label{bdp}
There exists a function $K:[0,1)\to [1,\infty)$ such that $\lim_{t\downto 0}K(t)=K(0)=1$, and 
$$
\sup\lt\{\frac{\big|\(\phi_\om^\l\)'(y)\big|}{\big|\(\phi_\om^\l\)'(x)\big|}
:e\in E,\, \l\in\La, \, x\in X,\, ||y-x||\le t\cdot \dist(x,\R^q\sms W)\rt\}
\le K(t).
$$
\eproperty

We also require some common measurability
conditions. Precisely, we assume that for every $e\in E$ and every $x\in X$ the   map
$$
\La\ni\l\mapsto\varphi_e^\l(x)
$$
is measurable. According to Lemma~1.1 in~\cite{Crauel}, this implies that, for all $e\in E$, the maps 
$$
\La\times X\ni (\l,x)\mapsto\varphi_e(x,\l):=\varphi_e^\l(x)
$$ 
are (jointly) measurable.  
For every finite sequence $\om\in E^*$, and every $\lambda \in \La$, let us define also the (randomized) composition of contractions
\[ 
\varphi_\om^\l
:=\varphi_{\om_1}^\l\circ
  \varphi_{\om_2}^{\th(\l)}\circ\ldots\circ
  \varphi_{\om_{|\om|}}^{\th^{|\om|-1}(\l)}
\] 
This formula exhibits  the random aspect of our iterations: we
choose consecutive generators $\varphi_{\om_1},\varphi_{\om_2},\ldots,
\varphi_{\om_{n}}$ according to a random process governed by the
ergodic map $\th:\La\rightarrow\La$. This random aspect is particularly
striking if $\th$ is a Bernoulli shift when, in the random composition we choose
$\phi_e^\l$ in an independent identically distributed way.

 For $\om\in E^\infty, \lambda \in \Lambda$, we define analogously to the deterministic case (\cite{gdms},  etc.), the point
\[
\pi_\l(\omega):=\bigcap_{n=1}^\infty\varphi_{\om|_n}^\l(X),
\]
and then the fractal limit set of the random countable IFS, corresponding to $\lambda \in \Lambda$ is:
\[ 
J_\l:=\pi_\l(E^\infty)
\]

Let us denote by $\pi_\Lambda: \Lambda \times E^\infty \to \La$ and $\pi_{E^\infty}: \Lambda \times E^\infty  \to  E^\infty$,  the projections on the first, respectively the second coordinates.
And by $\pi_{\mathbb R^q}: \Lambda \times E^\infty \to \mathbb R^q$ the projection defining the limit sets $J_\lambda, \ \lambda \in \La$, namely $\pi_{\mathbb R^q}(\lambda, \omega) = \pi_\lambda(\omega)$, for $(\lambda, \omega) \in \Lambda \times E^\infty$.

Let us also denote by $\xi$  the partion of $E^\infty$ into initial cylinders of length $1$; we will work in the sequel with conditional entropies of partitions and of probability measures (see for example \cite{Wa}, \cite{Mane} for general definitions and properties). 

Given a Lebesgue space $(Y, \mathcal{B}, \mu)$ and two measurable partitions of it, $\eta$ and $\zeta$, we will sometimes write $H_\mu(\eta| \zeta)$ without loss of generality, for the measure-theoretic conditional entropy $H_\mu(\eta| \hat \zeta)$ of the partition $\eta$ with respect to the $\sigma$-algebra $\hat \zeta$ generated by $\zeta$. \
We will introduce now a notion of \textit{measure-theoretical projectional entropy} for the random infinite system and for a projection measure.

\begin{defin}\label{randomprojent}
 Given the random countable iterated function system $\mathcal{S}$ as above, and a $\theta \times \sigma$--invariant probability measure $\mu$ on $\Lambda \times E^\infty$, define the \textit{random projectional entropy} of the measure $\mu$ relative to the system $\mathcal{S}$, to be: 
$$
\hmu(\cS)
:=\H_\mu\(\pi_{E^\infty}^{-1}(\xi)\big|\pi_\La^{-1}(\e_\La)\vee(\th\times\sg)^{-1}
         (\pi_{\R^q}^{-1}(\e_{\R^q}))\)
  -\H_\mu\(\pi_{E^\infty}^{-1}(\xi)\big|\pi_\La^{-1}(\e_\La)\vee
         \pi_{\R^q}^{-1}(\e_{\R^q})\),
$$
where $\e_\La, \e_{\R^q}$ are the point partitions of $\La$, respectively $\R^q$.
\end{defin}

\

In the sequel we will consider only those $\theta \times \sigma$--invariant probability measures $\mu$ on $\Lambda \times E^\infty$ whose marginal measure on the parameter space $\La$ is equal to $m$, i. e. such that
$$
\mu\circ \pi_\La^{-1}=m
$$
We denote then by $(\mu_\l)_{\l\in\La}$ the Rokhlin's disintegration of the measure $\mu$ with respect to the fiber partition $(\pi_\La^{-1})_{\l\in\La}$. Its elements, $\{\lambda\}\times E^\infty$, $\l\in\La$, will be frequently identified with the set $E^\infty$ and we will treat each probability measure $\mu_\l$ as defined on $E^\infty$.

 The desintegration $(\mu_\l)_{\l\in\La}$ depending measurably on $\l$, is uniquely determined by the property that for any $\mu$-integrable function $g: \La \times E^\infty \to \R$, we have
$$
\int_{\Lambda \times E^\infty}gd\mu
=\int_\La\int_{E^\infty}gd\mu_\l \ dm(\l)
$$
Thus from Lemma~2.2.3 in \cite{Bogenschutz}, we have the following equivalent desintegration  formula for the random projectional entropy: 
\beq\label{1ie12}
\hmu(\cS)
=\int_\La\H_{\mu_\l}\(\xi\big|\sg^{-1}(\pi_{\th(\l)}^{-1}(\e_{J_{\th(\l)}})\)dm(\l)
  - \int_\La\H_{\mu_\l}\(\xi\big|\pi_\l^{-1}(\e_{J_\l})\)dm(\l)
\eeq
Using Definition \ref{randomprojent} and the definitions of conditional entropy and conditional expectations (for eg from \cite{Wa}, etc.), we can then further write: 
\beq\label{2ie12}
\begin{aligned}
\hmu(\cS)
=\int_\La\Big[\int_{E^\infty}\log E_{\mu_\l}&\(\1_{[\om_1]}\big|\pi_\l^{-1}(\e_{J_\l})\) 
    (\om)d\mu_\l(\om)- \\ &-\int_{E^\infty}\log E_{\mu_\l}\(\1_{[\om_1]}\big|  
     (\pi_{\th(\l)}\circ\sg)^{-1}(\e_{J_{\th(\l)}})\)(\om)d\mu_\l(\om)\Big] dm(\l)
\end{aligned}
\eeq

\

We will see that there are important differences from the finite deterministic case, since here we have a family $(J_\lambda)_{\lambda \in \Lambda}$ of possibly non-compact limit sets, and a family of boundaries at infinity $(\partial_\infty\cS_\lambda)_{\lambda \in \Lambda}$.  The $\lambda$-\textit{boundary at infinity} of $\cS$, denoted by $\cS_\lambda(\infty)$, is defined as the set of accumulation points of sequences of type $(\phi^\lambda_{e_n}(x_n))_n$, for arbitrary points $x_n \in X$ and infinitely many different indices $e_n \in E$. Similarly as in the deterministic case \cite{MU}, we define $$\cS^+_\lambda(\infty):= \mathop{\bigcup}\limits_{\omega \in E^*} \phi^{\theta(\lambda)}_\omega(\cS_\lambda(\infty))$$ 

We give now some results about the relations between the random projectional entropy $h_\mu(\cS)$ and the measure-theoretical entropy $h(\mu)$ of the $\theta\times \sigma$-invariant probability $\mu$ on $\Lambda \times E^\infty$. In this way we obtain upper and lower bounds for the random projectional entropy $h_\mu(\cS)$. 

\begin{thm}\label{ab}
In the above setting, if $\cS$ is a random countable iterated function system and if $\mu$ is a  $(\theta\times \sigma)$-invariant probability on $\Lambda \times E^\infty$, we have the following inequalities:

\ (a) $$h_\mu(\cS) \le h(\mu)$$

\ (b) Assume that there exists an integer $k \ge 1$, such that for $\mu$-almost every $(\lambda, \omega) \in \Lambda \times E^\infty$ there exists $r(\lambda, \omega) >0$ and $k$ indices $e_1, \ldots, e_k \in E$, so that if the ball $B(\pi_\lambda(\omega), r(\lambda, \omega)) \subset \R^q$ intersects a set of type $\phi^{\lambda'}_e(J_{\lambda'}), \ e \in E, \lambda' \in \Lambda$, then $e$ must belong to $\{e_1, \ldots, e_k\}$. Then
$$h_\mu(\cS) \ge h(\mu) -   \log k$$
\end{thm}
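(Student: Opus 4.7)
I would begin by fixing convenient abbreviations: let $\alpha := \pi_{E^\infty}^{-1}(\xi)$, $\Lambda_0 := \pi_\La^{-1}(\e_\La)$, $T := \th\times\sg$, $P_0 := \pi_{\R^q}^{-1}(\e_{\R^q})$, and $P_1 := T^{-1}(P_0)$. Definition \ref{randomprojent} then reads $\hmu(\cS) = \H_\mu(\alpha\,|\,\Lambda_0 \vee P_1) - \H_\mu(\alpha\,|\,\Lambda_0 \vee P_0)$. The pivotal structural observation I would use repeatedly is that $P_1$ is measurable with respect to $\Lambda_0 \vee \bigvee_{n\ge 1} T^{-n}\alpha$, because the point $\pi_{\th(\l)}(\sg\om)$ depends only on $\th(\l)$ and on the tail $(\om_2,\om_3,\ldots)$.

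The plan for part (b) is to bound the two summands of $\hmu(\cS)$ separately. For the subtracted term, condition (b) supplies $\mu$-a.e.\ a radius $r(\l,\om)>0$ and indices $e_1,\ldots,e_k$. Any $\om'\in E^\infty$ in the same $(\Lambda_0\vee P_0)$-atom as $(\l,\om)$ satisfies $\pi_\l(\om')=\pi_\l(\om)$, so $\phi^\l_{\om'_1}(J_{\th(\l)})$ contains this common projection and in particular meets $B(\pi_\l(\om),r(\l,\om))$, which forces $\om'_1\in\{e_1,\ldots,e_k\}$. Hence the conditional law of $\om_1$ given $\Lambda_0\vee P_0$ is supported on at most $k$ symbols, so by Jensen's inequality $\H_\mu(\alpha\,|\,\Lambda_0\vee P_0)\le\log k$. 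For the leading term, the pivotal observation together with monotonicity of conditional entropy gives
$$
\H_\mu(\alpha\,|\,\Lambda_0 \vee P_1) \ge \H_\mu\Big(\alpha\,\Big|\,\Lambda_0 \vee \bigvee_{n\ge 1} T^{-n}\alpha\Big) = h(\mu),
$$
where the equality is the (random) Kolmogorov--Sinai formula applied to the fiber generator $\alpha\vee\Lambda_0$ of $T$. Subtracting yields $\hmu(\cS)\ge h(\mu)-\log k$, proving (b).

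For part (a) the same two observations point in the wrong direction, since $\H_\mu(\alpha\,|\,\Lambda_0\vee P_1)\ge h(\mu)$ means the crude estimate $\hmu(\cS)\le \H_\mu(\alpha\,|\,\Lambda_0\vee P_1)$ is too weak. My plan is to use $T$-invariance of $\mu$ to rewrite the subtracted term as
$$
\H_\mu(\alpha\,|\,\Lambda_0 \vee P_0) = \H_\mu(T^{-1}\alpha\,|\,T^{-1}(\Lambda_0 \vee P_0)) = \H_\mu(T^{-1}\alpha\,|\,\Lambda_0 \vee P_1),
$$
so that $\hmu(\cS) = \H_\mu(\alpha\,|\,\Lambda_0\vee P_1) - \H_\mu(T^{-1}\alpha\,|\,\Lambda_0\vee P_1)$. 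Iterating this identity over $n$ shifts, combining with the chain rule for conditional entropy, and invoking the injectivity-based identity $\Lambda_0 \vee P_0 \vee \alpha_n = \Lambda_0 \vee T^{-n}P_0 \vee \alpha_n$ with $\alpha_n := \bigvee_{k=0}^{n-1} T^{-k}\alpha$ (which follows from the injectivity of the compositions $\phi^\l_{\om|_n}$), one can express $n\cdot\hmu(\cS)$ as a single conditional-entropy difference; dividing by $n$ and passing to the limit via the Shannon--McMillan theorem for the random shift identifies the limit as $h(\mu)$, giving $\hmu(\cS)\le h(\mu)$.

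The main obstacle I anticipate is technical: the partitions $P_0, P_1$ are uncountable point partitions, so several entropies appearing in the manipulations above are \emph{a priori} infinite. I would handle this by a standard approximation: replace $\e_{\R^q}$ by a sequence of finite measurable partitions $\beta_N$ of $\R^q$ (e.g.\ dyadic cubes of diameter $2^{-N}$), carry out all chain-rule manipulations at finite level $N$, and then pass to the limit $N\to\infty$ using monotone continuity of conditional entropy as the conditioning $\sigma$-algebra grows. This step is legitimate under the paper's standing finite-entropy hypothesis $\H_{\mu_\l}(\xi)<\infty$ for $m$-a.e.\ $\l$, which ensures that each of the two summands of $\hmu(\cS)$ is individually finite.
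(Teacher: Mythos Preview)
Your proposal is correct and follows essentially the same route as the paper. Both parts rest on the same two ingredients you isolate: the injectivity identity $\Lambda_0\vee P_0\vee\alpha_n=\Lambda_0\vee T^{-n}P_0\vee\alpha_n$ (the paper's formula~(\ref{hatxi}), iterated) for part~(a), and for part~(b) the pair of estimates $\H_\mu(\alpha\mid\Lambda_0\vee P_0)\le\log k$ together with $\H_\mu(\alpha\mid\Lambda_0\vee P_1)\ge h(\mu)$ via $P_1\subset\sg\big(\Lambda_0\vee\bigvee_{n\ge1}T^{-n}\alpha\big)$.

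The only substantive presentational difference is in how the bound $\H_\mu(\alpha\mid\Lambda_0\vee P_0)\le\log k$ is obtained. You argue directly on atoms of the point partition: any $\om'$ in the same $(\Lambda_0\vee P_0)$-atom has $\om'_1$ confined to the $k$ admissible indices, so the conditional law is supported on at most $k$ symbols. The paper instead approximates $\e_{\R^q}$ by explicit finite dyadic partitions $\mathcal R_n(\lambda)$ (keeping only cubes that contain a point with $r(\lambda,\om)>q/2^n$), verifies $\H_{\mu_\l}(\xi\mid\pi_\l^{-1}\mathcal R_n(\lambda))\le\log k$ at each finite stage, and lets $n\to\infty$. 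Your atom-counting argument is cleaner; the paper's approximation scheme is the concrete realization of the ``replace $\e_{\R^q}$ by dyadic cubes'' step you anticipated as a technical safeguard. For the lower bound on the first summand the paper also works fiberwise, writing $\H_{\mu_\l}(\xi\mid\sg^{-1}\pi_{\th(\l)}^{-1}\e_{J_{\th(\l)}})\ge\H_{\mu_\l}(\xi\mid\sg^{-1}\cB(E^\infty))=h_\sg(\mu_\l)$ and integrating over $\l$, which is exactly your global inequality unpacked via the disintegration~(\ref{1ie12}).
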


\begin{proof}

\ (a) Let us denote by $\mathcal B$ the $\sigma$-algebra of borelian sets in $\R^q$, and by $\hat \xi$ the $\sigma$-algebra  generated by the partition $\tilde \xi = \pi_{E^\infty}^{-1} \xi$ in $\Lambda \times E^\infty$.  \
We want to prove first that 
\begin{equation}\label{hatxi}
\hat \xi \vee (\theta\times \sigma)^{-1} \pi_{\R^q}^{-1}\mathcal B = \hat \xi \vee \pi_{\R^q}^{-1}
\end{equation}

\fr But an element of the $\sigma$-algebra $\hat \xi \vee (\theta\times\sigma)^{-1}\pi_{\R^q}^{-1}\mathcal B$ is a set of type
$$\mathop{\cup}\limits_{i \in E} (\Lambda \times [i]) \cap (\theta\times \sigma)^{-1}\pi_{\R^q}^{-1}A_i,$$
where $A_i \in \mathcal B, i \in E$.
Let us take an element $(\lambda, \omega) \in \pi_{\R^q}^{-1}(A_i)$, so $\pi_{\R^q}(\lambda, \omega) \in A_i$, where $\omega = (\omega_1, \omega_2, \ldots)$. Then an element $\zeta$ from the preimage set $(\theta^{-1} \times \sigma)^{-1}(\lambda, \omega)$, has the form  $(\theta^{-1}\lambda, (\omega_0, \omega_1, \ldots)$, for arbitrary $\omega_0 \in E$; if this element belongs in addition to $\Lambda \times [i]$, then $\omega_0 = i$.  Now $\pi_{\R^q}(\zeta) = \phi_i^{\theta^{-1}\lambda}(\pi_{\R^q}(\lambda, \omega) \in \phi_i^{\theta^{-1}\lambda}(A_i)$. Therefore we proved that
$$(\Lambda \times [i]) \cap (\theta \times \sigma)^{-1}\pi_{\R^q}^{-1}A_i = (\Lambda \times [i]) \cap \pi_{\R^q}^{-1}(\phi_i^{\theta^{-1}\lambda}(A_i))$$
Thus $\hat \xi \vee (\theta\times \sigma)^{-1}\pi_{\R^q}^{-1}\mathcal B \subseteq \hat \xi \vee \pi_{\R^q}^{-1}\mathcal B$, and after showing also the converse inequality of $\sigma$-algebras we obtain (\ref{hatxi}), i.e  that $\hat \xi \vee (\theta\times\sigma)^{-1}\pi_{\R^q}^{-1}\mathcal B = \hat \xi \vee \pi_{\R^q}^{-1} \mathcal B$.

 For an arbitrary integer $n \ge 1$, let us denote the measurable partition $\tilde \xi _0^{n-1}:= \xi \vee \sigma^{-1} \xi \ldots \vee 
\sigma^{-n}\xi$.
Using now the fact that  the measure $\mu$ is $(\theta\times \sigma)$-invariant on $\Lambda \times E^\infty$, and the same type of argument as in Lemma 4.8 of \cite{FH}, we obtain that for every integer $n \ge 1$, 
\begin{equation}\label{nH}
H_\mu(\tilde \xi_0^{n-1}| (\theta\times \sigma)^{-n}\pi_{\R^q}^{-1} \mathcal B) - H_\mu(\tilde \xi_0^{n-1} | \pi_{\R^q}^{-1} \mathcal B) = n \cdot \big[H_\mu(\tilde \xi| (\theta\times \sigma)^{-1}\pi_{\R^q}^{-1} \mathcal B) - H_\mu(\tilde \xi | \pi_{\R^q}^{-1} \mathcal B)\big]
\end{equation}
Hence from formula (\ref{nH}) we obtain the following inequality: $$n h_\mu(\cS) = H_\mu(\tilde \xi_0^{n-1}|(\theta\times \sigma)^{-1}\pi_{\R^q}^{-1}\mathcal B) - H_\mu(\tilde \xi_0^{n-1}|\pi_{\R^q}^{-1}\mathcal B) \le H_\mu(\tilde \xi_0^{n-1})$$
Therefore, as $h(\mu)$ is the supremum of the limits of $\frac 1n H_\mu\big(\mathop{\vee}\limits_0^{n-1} (\theta\times \sigma)^{-i} \tau\big)$ when $n \to \infty$, over all partitions $\tau$ of $\Lambda \times E^\infty$, we obtain the upper bound $h_\mu(\cS) \le h(\mu)$. 

\

\ (b) We remind that $\xi$ is the partition of $E^\infty$ into the 1-cylinders $[i]:= \{\omega \in E^\infty, \omega = (\omega_1, \omega_2, \ldots), \ \omega_1 = i\}$, for $i \in E$; and also that for simplicity of notation, given in general 2 measurable partitions $\eta, \zeta$ of a Lebesgue space $(Y, \nu)$, we will sometimes write $H_\nu(\eta| \zeta)$ instead of $H_\nu(\eta| \hat \zeta)$ where $\hat \zeta$ is the $\sigma$-algebra generated by $\zeta$. We now assume that for $\mu$-almost every $(\lambda, \omega) \in \Lambda \times E^\infty$, there are at most $k$ indices $e\in E$ so that sets of type $\phi_e^{\lambda'}(J_{\lambda'}), \lambda' \in \La$ intersect the ball $B(\pi_\lambda(\omega), r(\lambda, \omega))$. 
Let us consider next the partition $\mathcal P_n$ of $\R^q$ with sets of type $I_{(i_1, \ldots, i_q)}= [\frac{i_1}{2^n}, \frac{i+1}{2^n}) \times \ldots \times [\frac{i_q}{2^n}, \frac{i_q+1}{2^n})$, for all multi-indices $(i_1, \ldots, i_q) \in \Z^q$. 

For $m$-almost every $\lambda \in \La$ we will now construct the subpartition $\mathcal R_n(\lambda) \subseteq \mathcal P_n$, which uses only those sets $I_{(i_1, \ldots, i_q)} \in \mathcal P_n$ that contain points $\pi_\lambda(\omega) \in J_\lambda, \omega \in E^\infty$, with $r(\lambda, \omega) > q/2^n$, and where the union of all the remaining cubes $I_{(i_1, \ldots, i_q)}$ of $\mathcal P_n$ represents just one element of $\mathcal R_n(\lambda)$. 
But we assumed that for $\mu$-almost all $(\lambda, \omega) \in \Lambda \times E^\infty$, there exists a radius $r(\lambda, \omega) >0$, such  that:
\begin{equation}\label{card}
\text{Card} \{i\in E, \ \exists \lambda' \in \La \ \text{s.t} \  B(\pi_\lambda(\omega), r(\lambda, \omega)) \cap \phi_i^{\lambda'}(J_{\lambda'}) \ne \emptyset\} \le k
\end{equation}
 So using the fact that $n$ was chosen so that any cube $I_{(i_1, \ldots, i_q)} \in \mathcal R_n(\lambda)$ contains at least a point of type $\pi_\lambda(\omega), \omega \in E^\infty$ with $r(\lambda, \omega) > \frac{q}{2^n}$, we obtain that any fixed set $A$ from the partition $\pi_{\lambda}^{-1}(\mathcal R_n(\lambda))$ of $E^\infty$,  intersects \textit{at most} $k$ elements of the partition $\xi \vee \pi_{\lambda}^{-1}(\mathcal R_n(\lambda))$ of $E^\infty$. \ 
Recall also that $\mu_\lambda\circ \pi_\lambda^{-1}$ is a $\sigma$-invariant probability measure on $E^\infty$, for $\lambda \in \Lambda$. Hence from above and using \cite{Pa}, \cite{Wa}, it follows that the conditional entropy $H_{\mu_\lambda}\big(\xi| \pi_{\lambda}^{-1}(\mathcal R_n(\lambda))\big)$ satisfies:
\begin{equation}\label{H-ine}
H_{\mu_\lambda}\big(\xi| \pi_{\lambda}^{-1}(\mathcal R_n(\lambda))\big) = H_{\mu_\lambda}(\xi \vee \pi_{\lambda}^{-1} \mathcal R_n(\lambda)) - H_{\mu_\lambda}(\pi_{\lambda}^{-1}(\mathcal R_n(\lambda)) \le \log k
\end{equation}

\fr But now, since we known that for $\mu$-almost all $(\lambda, \omega) \in \La \times E^\infty$ there exists a radius $r(\lambda, \omega)>0$ satisfying condition (\ref{card}), we infer that $\pi_{\lambda}^{-1}(\mathcal R_n(\lambda)) \nearrow \pi_{\lambda}^{-1}(\epsilon_{\R^q})$, when $n \to \infty$; \ and the same conclusion for the respective $\sigma$-algebras generated by these partitions in $E^\infty$. Therefore from (\ref{H-ine}) and \cite{Pa}, and since $\mu\circ\pi_\La^{-1}= m$, it follows that for $m$-almost every $\lambda \in \La$, the conditional entropy $H_{\mu_\lambda}(\xi | \pi_{\lambda}^{-1} \mathcal B)$ satisfies the inequality $$H_{\mu_\lambda}(\xi | \pi_{\lambda}^{-1}(\mathcal B)) = \mathop{\lim}\limits_{n \to \infty} H_{\mu_\lambda}(\xi | \pi_{\lambda}^{-1} \mathcal R_n(\lambda)) \le \log k$$

\fr In addition we have that for $m$-almost any parameter $\lambda \in \La$, $$H_{\mu_\lambda}(\xi | \sigma^{-1} (\pi_{\theta(\lambda)}^{-1} \epsilon_{J_{\theta(\lambda)}})) \ge H_{\mu_\lambda}(\xi | \sigma^{-1} (\mathcal B(E^\infty))) = h_\sigma(\mu_\lambda),$$
since $\xi$ is a generator partition for $\mu_\lambda$ on $E^\infty$, and by using section 3-1 of \cite{Pa}.  
Therefore, from (\ref{1ie12}) and the last two displayed inequalities, we obtain the required inequality, namely 
$$h_\mu(\cS) \ge \int_\La h_\sigma(\mu_\lambda) d m(\lambda)  - \log k = h(\mu) - \log k$$
\end{proof}

\begin{rem}
We remark that the condition in Theorem~\ref{ab}, part (b), implies that there are no points from $\cS_\lambda(\infty)$ in any of the limit sets $J_{\lambda'}$ for all $\lambda, \lambda' \in \Lambda$. We shall give an example of such a random infinite system with overlaps in the last section. The difficulty without this condition is that, there may be a variable number of overlaps at points from the possibly non-compact fractal $J_\lambda$, and that this number may tend to $\infty$ even for a given $\lambda$, or that it may tend to $\infty$ when $\lambda$ varies in $\Lambda$; in both of these cases, we cannot obtain however a lower estimate for $h_\mu(\cS)$ like the one in Theorem \ref{ab} (b). 

\end{rem}

\

\section{Pointwise dimension for random projections of measures.}

Given a metric space $(X,\rho)$ and a measurable map $H:E^\infty\to X$, then for every sequence $\om\in E^\infty$ and every $r>0$, we shall denote by
$$
B_H(\om,r):=H^{-1}(B_\rho(H(\om),r)).
$$
 Our main result in this section is the exact dimensionality of random projections $\mu_\lambda$ on $J_\lambda$, of $(\theta\times \sigma)$-invariant probabilities $\mu$ from $\Lambda \times E^\infty$, for $m$-almost all parameters $\lambda\in \Lambda$. We start with the following:

\blem\label{l1ie19}
For all integers $k\ge 0$, every $e\in E$ and $\l\in\La$, and $\mu_\l$-a.e. $\om\in E^\infty$, we have
$$
\lim_{r\to 0}\log\frac{\mu_\l\(B_{\pi_{\th^k(\l)}\circ\sg^k}(\om,r)\cap[e]\)}
                      {\mu_\l\(B_{\pi_{\th^k(\l)}\circ\sg^k}(\om,r)\)}
=\log E_{\mu_\l}\(\1_{[e]}\big|(\pi_{\th^k(\l)}\circ\sg^k)^{-1}(\cB_{\R^q}))\)(\om).
$$
\elem

\begin{proof}
Fix $e\in E$ and define the following two Borel measures on $\R^q$:
\beq\label{1aie19}
\nu_\l:=\mu_\l\circ(\pi_{\th^k(\l)}\circ\sg^k)^{-1}, \ \text{and}
\eeq
\beq\label{1bie19}
\nu_\l^e(D):=\mu_\l\([e]\cap(\pi_{\th^k(\l)}\circ\sg^k)^{-1}(D)\), \  \  D\text{ Borel set in } \ 
  \R^d.
\eeq
Since $\nu_\l^e\le\nu_\l$, the measure $\nu_\l^e$ is absolutely continuous with respect to $\nu_\l$. Let us then define the Radon-Nikodym derivative of $\nu_\l^e$ with respect to $\nu_\l$: 
$$
g_\l^e:=\frac{d\nu_\l^e}{d\nu_\l}
$$
Then, by Theorem~2.12 in \cite{Mattila}, we have that:
\beq\label{1ie21}
g_\l^e(x)=\lim_{r\to 0}\frac{\nu_\l^e(B(x,r)}{\nu_\l(B(x,r)}
\eeq
for $\nu_\l$-a.e. $x\in\R^q$. On the other hand, for every set $F\in (\pi_{\th^k(\l)}\circ\sg^k)^{-1}\(\cB_{\R^q}\)$, say $F=(\pi_{\th^k(\l)}\circ\sg^k)^{-1}(\tilde F)$, $\tilde F\in \cB_{\R^q}$, we have
$$
\begin{aligned}
\int_F E_{\mu_\l}&\(\1_{[e]}\big|  
     (\pi_{\th^k(\l)}\circ\sg)^{-1}(\cB_{\R^q})\)d\mu_\l=\int_F\1_{[e]}d\mu_\l 
 =\mu_\l(F\cap[e]) \\
&=\mu_\l\((\pi_{\th^k(\l)}\circ\sg^k)^{-1}(\tilde F)\cap[e]\)
 =\nu_\l^e(\tilde F)
 =\int_{\tilde F}g_\l^ed\nu_\l \\
&=\int_{\tilde F}g_\l^ed(\mu_\l\circ(\pi_{\th^k(\l)}\circ\sg^k)^{-1})
 =\int_{\R^q}\1_{\tilde F}\, g_\l^ed(\mu_\l\circ(\pi_{\th^k(\l)}\circ\sg^k)^{-1})\\
&=\int_{E^\infty}\1_{\tilde F}\circ(\pi_{\th^k(\l)}\circ\sg^k)\,
        g_\l^e\circ(\pi_{\th^k(\l)}\circ\sg^k)d\nu_\l \\
&=\int_{E^\infty}\1_F\, g_\l^e\circ(\pi_{\th^k(\l)}\circ\sg^k)d\nu_\l \\
&=\int_Fg_\l^e\circ(\pi_{\th^k(\l)}\circ\sg^k)d\nu_\l.
\end{aligned}
$$
Since, in addition, both functions $E_{\mu_\l}\(\1_{[e]}\big|      (\pi_{\th^k(\l)}\circ\sg)^{-1}(\cB_{\R^q})\)$ and $g_\l^e\circ(\pi_{\th^k(\l)}\circ\sg^k)$ are non-negative and measurable with respect to the $\sg$-algebra $(\pi_{\th^k(\l)}\circ\sg)^{-1}(\cB_{\R^q})$, we conclude that
$$
g_\l^e\circ(\pi_{\th^k(\l)}\circ\sg)^{-1}(\cB_{\R^q})(\om)
= E_{\mu_\l}\(\1_{[e]}\big|(\pi_{\th^k(\l)}\circ\sg)^{-1}(\cB_{\R^q})\)(\om)
$$
for $\mu_\l$-a.e. $\om\in E^\infty$. Along with \eqref{1ie21} this means that
$$
\lim_{r\to 0}\frac{\mu_\l\(B_{\pi_{\th^k(\l)}\circ\sg^k}(\om,r)\cap[e]\)}
                      {\mu_\l\(B_{\pi_{\th^k(\l)}\circ\sg^k}(\om,r)\)}
=E_{\mu_\l}\(\1_{[\om_1]}\big|(\pi_{\th^k(\l)}\circ\sg^k)^{-1}(\e_{J_\l})\)(\om)
$$
for $\mu_\l$-a.e. $\om\in E^\infty$. Taking logarithms the lemma follows. 

\end{proof}

\bcor\label{c1ie23}
For all integers $k\ge 0$, all $\l\in\La$, and $\mu_\l$-a.e. $\om\in E^\infty$, we have
$$
\lim_{r\to 0}\log\frac{\mu_\l\(B_{\pi_{\th^k(\l)}\circ\sg^k}(\om,r)\cap[\om_1]\)}
                      {\mu_\l\(B_{\pi_{\th^k(\l)}\circ\sg^k}(\om,r)\)}
=\log E_{\mu_\l}\(\1_{[\om_1]}\big|(\pi_{\th^k(\l)}\circ\sg^k)^{-1}(\cB_{\R^q}))\).
$$
\ecor

\begin{proof}
We have 
$$
\begin{aligned}
\lim_{r\to 0}\log&\frac{\mu_\l\(B_{\pi_{\th^k(\l)}\circ\sg^k}(\om,r)\cap[\om_1]\)}
                      {\mu_\l\(B_{\pi_{\th^k(\l)}\circ\sg^k}(\om,r)\)}= \\
&=\sum_{e\in E} \1_{[e]}(\om)
\lim_{r\to 0}\log\frac{\mu_\l\(B_{\pi_{\th^k(\l)}\circ\sg^k}(\om,r)\cap[e]\)}
                      {\mu_\l\(B_{\pi_{\th^k(\l)}\circ\sg^k}(\om,r)\)} \\
&=\sum_{e\in E} \1_{[e]}(\om)\log E_{\mu_\l}\(\1_{[e]}\big|(\pi_{\th^k(\l)}\circ\sg^k)^{-1}
      (\cB_{\R^q})\)(\om) \\
&=\log E_{\mu_\l}\(\1_{[\om_1]}\big|(\pi_{\th^k(\l)}\circ\sg^k)^{-1}(\cB_{\R^q}))\)(\om).
\end{aligned}
$$

\end{proof}

\fr Now we shall prove the following.

\blem\label{l2ie23}
If $\H_\mu\( \pi_{E^\infty}^{-1}(\xi)|\pi_\La^{-1}(\e_\La)\)<\infty$, then the function
$$
g(\l,\om)
:=-\inf_{r>0}\log\frac{\mu_\l\([\om_1]\cap B_{\pi_{\th^k(\l)}\circ\sg^k}(\om,r)\)}
                      {\mu_\l\(B_{\pi_{\th^k(\l)}\circ\sg^k}(\om,r)\)}\in\R
$$
is integrable with respect to the measure $\mu$, that is it belongs to $L^1(\mu)$.
\elem

\begin{proof}
Fix $\l\in\La$. Fix also $e\in E$. As in the proof of Lemma~\ref{l1ie19} consider measures $\nu_\l$ and $\nu_\l^e$ defined by \eqref{1aie19} and  \eqref{1bie19} respectively. By Theorem~2.19 in \cite{Mattila} we have that
$$
\begin{aligned}
\nu_\l^e\Bigg(&\lt\{x\in\R^q:\inf_{r>0}\lt\{\frac{\nu_\l^e(B(x,r)}{\nu_\l(B(x,r))}\rt\}<t\rt\}\Bigg)= \\
&=\nu_\l^e\Bigg(\lt\{x\in\R^q:\sup_{r>0}\lt\{\frac{\nu_\l(B(x,r))}
      {\nu_\l^e(B(x,r))}\rt\}>1/t\rt\}\Bigg)\\
&\le C_qt\nu_\l(\R^q) =C_qt,
\end{aligned}
$$
where $1\le C_q<\infty$ is a constant depending only on $q$. What we obtained means that
$$
\mu_\l\Bigg(\lt\{\om\in E^\infty:\inf_{r>0}\lt\{\frac{\mu_\l\([e]\cap
     B_{\pi_{\th^k(\l)}\circ\sg^k}(\om,r)\)}
 {\mu_\l\(B_{\pi_{\th^k(\l)}\circ\sg^k}(\om,r)\)}\rt\}
<t\rt\}\Bigg)
\le C_qt.
$$
Let us define also the function:
$$
G_\l^e(\om):=\inf_{r>0}\lt\{\frac{\mu_\l\([e]\cap
     B_{\pi_{\th^k(\l)}\circ\sg^k}(\om,r)\)}
 {\mu_\l\(B_{\pi_{\th^k(\l)}\circ\sg^k}(\om,r)\)}\rt\}.
$$
Then the previous inequality can be rewritten as:
$$
\mu_\l\((G_\l^e)^{-1}([0,t))\)\le C_qt.
$$
Define now the function $g_\l:E^\infty\to\R$ by
$
g_\l(\om)=g(\l,\om)$.
Thus the following equlity holds:
$$
g_\l=\sum_{e\in E}-\1_{[e]}\log G_\l^e.
$$
Noting also that $g_\l\ge 0$, we obtain therefore:
$$
\begin{aligned}
\int_{E^\infty}g_\l d\mu_\l
&=\sum_{e\in E}-\int_{[e]}\log G_\l^e d\mu_\l
 =\sum_{e\in E}\int_0^\infty\mu_\l\(\{\om\in [e]:-\log G_\l^e(\om)>s\}\)ds \\
&=\sum_{e\in E}\int_0^\infty\mu_\l\(\{\om\in [e]:G_\l^e(\om)<e^{-s}\}\)ds \\
&=\sum_{e\in E}\int_0^\infty\mu_\l\(\{\om\in E^\infty:G_\l^e(\om)<e^{-s}\}\cap [e]\)ds \\
&\le\sum_{e\in E}\int_0^\infty\min\{\mu_\l([e]),Cqe^{-s}\}ds \\
&=\sum_{e\in E}\lt(\int_0^{-\log\mu_\l([e])+\log C_q}\mu_\l([e])ds
    +\int_{-\log\mu_\l([e])+\log C_q}Cqe^{-s}ds\rt)  \\
&=\sum_{e\in E}\(-\mu_\l([e])\log\mu_\l([e])+\log(C_q)\mu_\l([e]))+\mu_\l([e])\) \\
&=1+\log(C_q)+\sum_{e\in E}\(-\mu_\l([e])\log\mu_\l([e])\) \\
&=1+\log(C_q)+\H_{\mu_l}(\xi)
\end{aligned}
$$
Since $\H_\mu\( \pi_{E^\infty}^{-1}(\xi)|\pi_\La^{-1}(\e_\La)\)<\infty$, it therefore follows from Lemma~2.3 in \cite{Bogenschutz} that 
$$
\begin{aligned}
\int_{\La\times E^\infty}gd\mu
&=\int_{\La}\int_{E^\infty}g_\l d\mu_\l dm(\l)
  \le 1+\log(C_q)+\int_{\La}\H_{\mu_l}(\xi) \\
&= 1+\log(C_q)+\H_\mu\( \pi_{E^\infty}^{-1}(\xi)|\pi_\La^{-1}(\e_\La)\) <\infty
\end{aligned}
$$
The proof is thus finished. 

\end{proof}

\begin{rem}\label{finiteent}
\fr We assumed above the finite entropy condition $\H_\mu\( \pi_{E^\infty}^{-1}(\xi)|\pi_\La^{-1}(\e_\La)\)<\infty$. This is not a restrictive condition, and it is satisfied by many measures and systems. For example, it is clearly satisfied if the alphabet $E$ is finite. More interestingly, it is also satisfied when $E$ is infinite and  $\mu = m \times \nu$, where $m$ is an arbitrary $\theta$-invariant probability on $\Lambda$, and $\nu$ is a $\sigma$-invariant probability on $E^\infty$ satisfying $\nu([i]) = \nu_i,  i \in E$ and $$h(\nu) = -\mathop{\sum}\limits_{i \in E} \nu_i \log \nu_i < \infty$$
Indeed, if $\mathcal A$ is the $\sigma$-algebra generated in $\Lambda \times E^\infty$ by the partition $ \pi_\Lambda^{-1}(\epsilon_\Lambda)$, and if $\tilde \xi:= \pi_{E^\infty}^{-1} \xi$, then $H_\mu(\tilde \xi|\mathcal A) = \int I_\mu(\tilde \xi|\mathcal A), \ \text{where} \ I_\mu(\tilde \xi| \mathcal A) \ \text{is the information function}$
$$I_\mu(\tilde \xi| \mathcal A) := - \mathop{\sum}\limits_{A \in \tilde \xi} \chi_A \cdot \log E_\mu(\chi_A| \mathcal A)$$

\fr Now, the conditional expectation $E_\mu(\chi_A|\mathcal A) =: g_A$ is $\mathcal A$-measurable, and $\int_{B \times E^\infty} g d \mu = \int_{B \times E^\infty} \chi_A d\mu$, for all sets $B$ measurable in $\Lambda$. Hence if $A = \Lambda \times [i]$, then $\int g_A d\mu = \mu(A \cap (B \times E^\infty)) = m(B) \cdot \nu_i$, so $g_A = \nu_i$  and $H_\mu(\tilde \xi|\mathcal A) = - \mathop{\sum}\limits_{i \in E} \nu_i \log \nu_i$.  Therefore,  if $h(\nu) < \infty$, then
$$H_\mu(\tilde \xi|\mathcal A) < \infty$$
$\hfill\square$
\end{rem}

\fr As an immediate consequence of Lemma \ref{l2ie23}, Corollary~\ref{c1ie23}, and Lebesgue's Dominated Convergence Theorem, we get the following:

\blem\label{l1ie27}
If $\H_\mu\( \pi_{E^\infty}^{-1}(\xi)|\pi_\La^{-1}(\e_\La)\)<\infty$, then 
$$
\lim_{r\to 0}
\log\frac{\mu_\l\([\om_1]\cap B_{\pi_{\th^k(\l)}\circ\sg^k}(\om,r)\)}
                      {\mu_\l\(B_{\pi_{\th^k(\l)}\circ\sg^k}(\om,r)\)}
=\log E_{\mu_\l}\(\1_{[\om_1]}\big|(\pi_{\th^k(\l)}\circ\sg^k)^{-1}(\cB_{\R^q}))\)(\om)
$$
for $\mu$-a.e. $(\l,\om)\in \La\times E^\infty$, and the convergence holds also in $L^1(\mu)$.
\elem

\fr Now we shall prove the following:

\blem\label{l1ie13}
For every $K\ge 1$ there exists $R_1>0$ such that
$$
[\om_1]\cap B_{\pi_\l}\(\om,K\big|\(\phi_{\om_1}^\l\)'(\pi_{\th(\l)}(\sg(\om)))\big|r\)
\spt [\om_1]\cap B_{\pi_{\th(\l)}\circ\sg}(\om,r)
$$
for all $\l\in\La$, all $\om\in E^\infty$, and all $r\in [0,R_1]$.
\elem

\begin{proof}
Let $\tau\in B_{\pi_{\th(\l)}\circ\sg}(\om,r)$. Then $\tau_1=\om_1$ and $\pi_{\th(\l)}(\sg(\tau))\in B(\pi_{\th(\l)}(\sg(\om)),r)$. Hence, 
$$
\begin{aligned}
\pi_\l(\tau)
&=\phi_{\om_1}^\l\(\pi_{\th(\l)}(\sg(\tau))\)
\in \phi_{\om_1}^\l\(B(\pi_{\th(\l)}(\sg(\om)),r)\) \\
&\sbt B\(\phi_{\om_1}^\l\(\pi_{\th(\l)}(\sg(\om))\), 
  K\big|\(\phi_{\om_1}^\l\)'(\pi_{\th(\l)}(\sg(\om)))\big|r\) \\
&= B\(\pi_\l(\om),K\big|\(\phi_{\om_1}^\l\)'(\pi_{\th(\l)}(\sg(\om)))\big|r\),
\end{aligned}
$$
where, because of the Bounded Distortion Property (BDP), the inclusion sign "$\sbt$" holds assuming $r>0$ to be small enough. This means that
$$
\tau\in \pi_\l^{-1}\(B\(\pi_\l(\om),K\big|\(\phi_{\om_1}^\l\)'(\pi_{\th(\l)}(\sg(\om)))\big|r\)\)
=B_{\pi_\l}\(\om,K\big|\(\phi_{\om_1}^\l\)'(\pi_{\th(\l)}(\sg(\om)))\big|r\)
$$
Since also already know that $\tau_1=\om_1$, we are thus done.

\end{proof}

\blem\label{l2ie13}
For every $K\ge 1$ there exists $R_2>0$ such that
$$
[\om_1]\cap B_{\pi_\l}\(\om,K^{-1}\big|\(\phi_{\om_1}^\l\)'(\pi_{\th(\l)}(\sg(\om)))
 \big|r\)\sbt [\om_1]\cap B_{\pi_{\th(\l)}\circ\sg}(\om,r)
$$
for all $\l\in\La$, all $\om\in E^\infty$, and all $r\in [0,R_2]$.
\elem

\begin{proof}
Because of the Bounded Distortion Property (BDP), we have for all $r\ge 0$ small enough, say $0\le r\le R_2$, that
$$
\begin{aligned}
B_{\pi_\l}\(\om,K^{-1}\big|\(\phi_{\om_1}^\l\)'(\pi_{\th(\l)}(\sg(\om)))\big|r\)
&=\pi_\l^{-1}\(B\(\pi_\l(\om),K^{-1}\big|\(\phi_{\om_1}^\l\)'(\pi_{\th(\l)}(\sg(\om)))
\big|r\)\) \\
&\sbt \pi_\l^{-1}\(\phi_{\om_1}^\l\(B(\pi_{\th(\l)}(\sg(\om)),r)\)\)
\end{aligned}
$$
So, fixing $\tau\in [\om_1]\cap B_{\pi_\l}\(\om,K^{-1}\big|\(\phi_{\om_1}^\l\)'(\pi_{\th(\l)}(\sg(\om)))\big|r\)$, we have $\tau_1=\om_1$ and 
$$
\pi_\l(\tau)
=\phi_{\om_1}^\l\(\pi_{\th(\l)}(\sg(\tau))\)
\phi_{\om_1}^\l\(B(\pi_{\th(\l)}(\sg(\om)),r)\).
$$
This means that $\pi_{\th(\l)}(\sg(\tau))\in B(\pi_{\th(\l)}(\sg(\om)),r)\)$, or equivalently, $\tau\in B_{\pi_{\th(\l)}\circ\sg}(\om,r)$. The required inclusion is thus proved and the proof is complete.

\end{proof}

\fr Since the measure $\mu$ is fiberwise invariant, we have for all $\om\in E^\infty$, all $r>0$, and $m$-a.e. $\l\in\La$ that
\beq\label{1ie15}
\begin{aligned}
\mu_\l\(B_{\pi_{\th(\l)}\circ \sg}(\om),r)\)
&=\mu_\l\((\pi_{\th(\l)}\circ\sg)^{-1}\(B(\pi_{\th(\l)}\circ\sg(\om),r)\)\) \\
&=\mu_\l\circ\sg^{-1}\(\pi_{\th(\l)}^{-1}\(B\(\pi_{\th(\l)}(\sg(\om)),r)\)\) \\
&=\mu_{\th(\l)}\(B_{\pi_{\th(\l)}}(\sg(\om),r)\)
\end{aligned}
\eeq

As an immediate consequence of this formula along with Lemma~\ref{l1ie13} and Lemma~\ref{l2ie13}, we get the following:

\blem\label{l1ie15}
For every $K>1$ there exists $R_K>0$ such that
$$
\frac{\mu_\l\([\om_1]\cap B_{\pi_\l}\(\om,K\big|\(\phi_{\om_1}^\l\)'(\pi_{\th(\l)}(\sg(\om)))\big|r\)\)}{\mu_{\th(\l)}\(B_{\pi_{\th(\l)}}(\sg(\om),r)\)}
\ge \frac{\mu_\l\([\om_1]\cap B_{\pi_{\th(\l)}\circ \sg}(\om,r)\)}  {\mu_\l\(B_{\pi_{\th(\l)}\circ \sg}(\om),r)\)}
$$
and
$$
\frac{\mu_\l\([\om_1]\cap B_{\pi_\l}\(\om,K^{-1}\big|\(\phi_{\om_1}^\l\)'(\pi_{\th(\l)}(\sg(\om)))\big|r\)\)}{\mu_{\th(\l)}\(B_{\pi_{\th(\l)}}(\sg(\om),r)\)}
\le \frac{\mu_\l\([\om_1]\cap B_{\pi_{\th(\l)}\circ \sg}(\om,r)\)} {\mu_\l\(B_{\pi_{\th(\l)}\circ \sg}(\om,r)\)}
$$
for all $\om\in E^\infty$, all $r\in(0,R_K]$, and $m$-a.e. $\l\in\La$.
\elem

\

\blem\label{l1ie14}
We have that
$$
\int_{\La}\int_{E^\infty}\log\mu_\l\(B_{\pi_\l}\(\om,r)\)d\mu_\l(\om)dm(\l)>-\infty
$$
for all $r>0$.
\elem

\begin{proof}
Since $X$ is compact there exist finitely many points $z_1, z_2,\ld, z_l$ in $X$ such that
$$
\bu_{j=1}^l B(z_j,r/2)\spt X.
$$
For every $\l\in\La$ and every integer $n\ge 0$ define the set of sequences:
$$
A_n(\l):=\{\om\in E^\infty:e^{-(n+1)}<\mu_\l\(B_{\pi_\l}\(\om,r)\)\le e^{-n}\}.
$$
Assume that 
$$
A_n(\l)\cap \pi_\l^{-1}(B(z_j,r/2))\ne\es
$$
for some $1\le j\le l$. Fix $\g\in A_n(\l)\cap \pi_\l^{-1}(B(z_j,r/2))$ arbitrary. Then, because of the triangle inequality, $\pi_\l^{-1}(B(z_j,r/2))\sbt B_{\pi_\l}(\om,r)$. Therefore,
$$
\mu_\l\(A_n(\l)\cap\pi_\l^{-1}(B(z_j,r/2))\)
\le \mu_\l\(\pi_\l^{-1}(B(z_j,r/2))\)
\le \mu_\l
\(B_{\pi_\l}(\om,r)\)
\le e^{-n}.
$$
However,  $\mu_\l(A_n(\l))\cap\pi_\l^{-1}(B(z_i,r/2))\)=0\le e^{-n}$ if $A_n(\l)\cap \pi_\l^{-1}(B(z_i,r/2))=\es$, for some $1\le i\le l$. Hence, 
since $\lt\{\pi_\l^{-1}(B(z_j,r/2))\rt\}_{j=1}^l$ is a cover of $E^\infty$, this implies that
$$
\mu_\l(A_n(\l))\le le^{-n}
$$
Therefore we obtain,
$$
\begin{aligned}
\int_{E^\infty}-\log\mu_\l\(B_{\pi_\l}\(\om,r)\)d\mu_\l(\om)
&=   \sum_{n=0}^\infty\int_{A_n(\l)}-\log\mu_\l\(B_{\pi_\l}\(\om,r)\)d\mu_\l(\om)\\
&\le \sum_{n=0}^\infty(n+1)le^{-n} =l\sum_{n=0}^\infty(n+1)e^{-n} 
<\infty.
\end{aligned}
$$
Hence, from the above, we can conclude that
$$
\int_{\La}\int_{E^\infty}\log\mu_\l\(B_{\pi_\l}\(\om,r)\) \ d\mu_\l(\om)dm(\l)
\le l\sum_{n=0}^\infty(n+1)e^{-n} 
<\infty.
$$
\end{proof}
\fr Then employing this lemma and Birkhoff's Ergodic Theorem, we obtain the following:
\blem\label{l2ie14}
For all $r>0$ and $\mu$-a.e. $(\l,\om)\in\La\times E^\infty$, we have:
$$
\lim_{n\to\infty}\frac1n\log\mu_{\th^n(\l)}\(B_{\pi_{\th^n(\l)}}\(\sg^n(\om),r)\)=0
$$
\elem

\fr Now, we shall prove the following:

\blem\label{l2ie15}
If $\H_\mu\(\pi_{E^\infty}^{-1}(\xi)|\pi_\La^{-1}(\e_\La)\)<\infty$, then for every $K>1$, all $r\in(0,R_K)$ and $\mu$-a.e. $(\l,\om)\in\La\times E^\infty$, we have that
\beq\label{1ie17}
\varlimsup_{n\to\infty}\frac1n\log\mu_\l\(B_{\pi_\l}\(\om,K^{-n}
\big|\(\phi_{\om|_n}^\l\)'(\pi_{\th^n(\l)}(\sg^n(\om)))\big|r\)\)
\le -\hmu(\cS), \
\eeq
and moreover
\beq\label{2ie17}
\varliminf_{n\to\infty}\frac1n\log\mu_\l\(B_{\pi_\l}\(\om,K^n
\big|\(\phi_{\om|_n}^\l\)'(\pi_{\th^n(\l)}(\sg^n(\om)))\big|r\)\)
\ge -\hmu(\cS).
\eeq
\elem

\begin{proof}
We prove the first inequality by relying on the second inequality of Lemma~\ref{l1ie15}. The proof of the second inequality of the lemma is analogous and will be omitted. We have:
$$
\begin{aligned}
T&_{\l,n}^-(\om)= \\
:&=\log\mu_\l\(B_{\pi_\l}\(\om, K^{-n}
         \big|\(\phi_{\om|_n}^\l\)'(\pi_{\th^n(\l)} (\sg^n(\om)))\big|r\)\) \\
&=\sum_{j=0}^{n-1}\log\frac{\mu_{\th^j(\l)}\(B_{\pi_{\th^j(\l)}}\(\sg^j(\om), K^{-(n-j)}
    \big|\(\phi_{\sg^j(\om)|_{n-j}}^{\th^j(\l)}\)'(\pi_{\th^n(\l)}(\sg^n(\om)))\big|r\)\)}
  {\mu_{\th^j(\l)}\(B_{\pi_{\th^{j+1}(\l)}}\(\sg^{j+1}(\om), K^{-(n-(j+1))}
    \big|\(\phi_{\sg^{j+1}(\om)|_{n-(j+1)}}^{\th^{j+1}(\l)}\)'
   (\pi_{\th^n(\l)}(\sg^n(\om)))\big|r\)\)} + \\
& \  \  \  \  \  \   \  \  \  \  \  \  \ + 
    \log\mu_{\th^n(\l)}\(B_{\pi_{\th^n(\l)}}\(\sg^n(\om),r)\) \\
&=\sum_{j=0}^{n-1}\log\frac{\mu_{\th^j(\l)}\([(\sg^j(\om))_1]\cap 
    B_{\pi_{\th^j(\l)}}\(\sg^j(\om), K^{-(n-j)}
    \big|\(\phi_{\sg^j(\om)|_{n-j}}^{\th^j(\l)}\)'(\pi_{\th^n(\l)}(\sg^n(\om)))\big|r\)\)}
  {\mu_{\th^j(\l)}\(B_{\pi_{\th^{j+1}(\l)}}\(\sg^{j+1}(\om), K^{-(n-(j+1))}
    \big|\(\phi_{\sg^{j+1}(\om)|_{n-(j+1)}}^{\th^{j+1}(\l)}\)'
   (\pi_{\th^n(\l)}(\sg^n(\om)))\big|r\)\)} - \\
& \ -\sum_{j=0}^{n-1}\log\frac{\mu_{\th^j(\l)}\([(\sg^j(\om))_1]\cap 
    B_{\pi_{\th^j(\l)}}\(\sg^j(\om), K^{-(n-j)}
    \big|\(\phi_{\sg^j(\om)|_{n-j}}^{\th^j(\l)}\)'(\pi_{\th^n(\l)}(\sg^n(\om)))\big|r\)\)}
{\mu_\l\(B_{\pi_{\th^j(\l)}}\(\sg^j(\om), K^{-(n-j)}
\big|\(\phi_{\sg^j(\om)|_{n-j}}^{\th^j(\l)}\)'(\pi_{\th^n(\l)}(\sg^n(\om)))\big|r\)\)}+\\
& \  \  \  \  \  \   \  \  \  \  \  \  \ + 
    \log\mu_{\th^n(\l)}\(B_{\pi_{\th^n(\l)}}\(\sg^n(\om),r)\) \\
&\le\sum_{j=0}^{n-1}\log\frac{\mu_{\th^j(\l)}\([(\sg^j(\om))_1]\cap 
   B_{\pi_{\th^{j+1}(\l)}\circ\sg}\(\sg^{j+1}(\om), K^{-(n-(j+1))}
     \big|\(\phi_{\sg^{j+1}(\om)|_{n-(j+1)}}^{\th^{j+1}(\l)}\)'
     (\pi_{\th^n(\l)}(\sg^n(\om)))\big|r\)\)}
{\mu_{\th^j(\l)}\(B_{\pi_{\th^{j+1}(\l)}\circ\sg}\(\sg^{j+1}(\om), K^{-(n-(j+1))}
    \big|\(\phi_{\sg^{j+1}(\om)|_{n-(j+1)}}^{\th^{j+1}(\l)}\)'
   (\pi_{\th^n(\l)}(\sg^n(\om)))\big|r\)\)} -  \\
& -\sum_{j=0}^{n-1}\log\frac{\mu_{\th^j(\l)}\([(\sg^j(\om))_1]\cap 
    B_{\pi_{\th^j(\l)}}\(\sg^j(\om), K^{-(n-j)}
    \big|\(\phi_{\sg^j(\om)|_{n-j}}^{\th^j(\l)}\)'(\pi_{\th^n(\l)}(\sg^n(\om)))\big|r\)\)}
{\mu_\l\(B_{\pi_{\th^j(\l)}}\(\sg^j(\om), K^{-(n-j)}
\big|\(\phi_{\sg^j(\om)|_{n-j}}^{\th^j(\l)}\)'(\pi_{\th^n(\l)}(\sg^n(\om)))\big|r\)\)}+\\
& \  \  \  \  \  \   \  \  \  \  \  \  \ + 
    \log\mu_{\th^n(\l)}\(B_{\pi_{\th^n(\l)}}\(\sg^n(\om),r)\) \\
&=\sum_{j=0}^{n-1}W_{n-j}^-((\th\times\sg)^j(\l,\om))
   -\sum_{j=0}^{n-1}G_{n-j}^-((\th\times\sg)^j(\l,\om))
    +\log\mu_{\th^n(\l)}\(B_{\pi_{\th^n(\l)}}\(\sg^n(\om),r)\),
\end{aligned}
$$
where for all $i\ge 1$,
$$
W_i^-(\l,\om)
:=\log\frac{\mu_\l\([\om_1]\cap B_{\pi_{\th(\l)}\circ\sg}\(\sg^{j+1}(\om), 
  K^{-(i-1)}\big|\(\phi_{\sg(\om)|_{i-1}}^{\th(\l)}\)'
     (\pi_{\th^i(\l)}(\sg^i(\om)))\big|r\)\)}
{\mu_\l\(B_{\pi_{\th(\l)}\circ\sg}\(\sg^{j+1}(\om), K^{-(i-1)}
 \big|\(\phi_{\sg(\om)|_{i-1}}^{\th(\l)}\)'(\pi_{\th^i(\l)}(\sg^i(\om)))
     \big|r\)\)}
$$
and where
$$
G_i^-(\l,\om)
:=\log
  \frac{\mu_\l\([\om_1]\cap B_{\pi_\l}\(\om, K^{-i}
       \big|\(\phi_{\om|_i}^\l\)'(\pi_{\th^i(\l)} (\sg^i(\om)))\big|r\)\)}
{\mu_\l\(B_{\pi_\l}\(\om, K^{-i}
       \big|\(\phi_{\om|_i}^\l\)'(\pi_{\th^i(\l)} (\sg^i(\om)))\big|r\)\)}.
$$

\ 

Now, by virtue of Lemma~\ref{l1ie27} we see that Corollary~1.6, p. 96 in \cite{Mane}, applies to the sequences $(W_i^-)_{i=1}^\infty$ and $(W_i^-)_{i=1}^\infty$. This, in conjunction with Lemma \ref{l1ie27}, Lemma~\ref{l2ie14}, the ergodicity of the measure $\mu$ with respect to the dynamical system $\th\times\sg$, and formula \eqref{2ie12}, gives us the following inequalities:
$$
\begin{aligned}
\varlimsup_{n\to\infty}T_{\l,n}^-(\om)
&\le\int_{\La\times E^\infty}\Big(\log E_{\mu_\l}\(\1_{[\om_1]}\big|(\pi_{\th(\l)}\circ\sg)^{-1}(\cB_{\R^q}))\)(\om) -  \\
& \  \  \  \   \  \  \  \  \  \   \  \  \ -\log E_{\mu_\l}\(\1_{[\om_1]}\big|(\pi_\l^{-1}(\cB_{\R^q}))\)(\om)\Big)d\mu_\l(\om)dm(\l) \\
&=-\hmu(\cS).
\end{aligned}
$$
This finishes thus the proof.

\end{proof}

\begin{defin}
In the above setting, let us define the \textit{Lyapunov exponent} of the measure $\mu$ with respect to the endomorphism $\th\times\sg:\La\times E^\infty\to\La\times E^\infty$ and the random countable iterated function system $\mathcal{S}$:
$$
\chi_\mu:=\int_{\La\times E^\infty}
-\log\big|\(\phi_{\om_1}^\l\)'(\pi_{\th(\l)} (\sg(\om)))\big|d\mu(\l,\om).
$$
\end{defin}

Since the above dynamical system is ergodic, then Birkhoff's Ergodic Theorem yields that, for $\mu$-a.e. $(\l,\om)\in\La\times E^\infty$, we have
\begin{equation}\label{l2ie27}
\lim_{n\to\infty}\frac1n\log
\big|\(\phi_{\om|_n}^\l\)'(\pi_{\th^n(\l)}(\sg^n(\om)))\big|
=\chi_\mu.
\end{equation}

\

\fr As a consequence of this lemma and Lemma~\ref{l2ie15}, we now prove the main result of our paper:

\bthm\label{l3ie27}
If $\H_\mu\(\pi_{E^\infty}^{-1}(\xi)|\pi_\La^{-1}(\e_\La)\)<\infty$, then for
$\mu$-a.e. $(\l,\om)\in\La\times E^\infty$, we have
$$
\lim_{r\to 0}\frac{\log\(\mu_\l\circ\pi_\l^{-1}\(B_{\pi_\l}(\om,r)\)\)}{\log r}
=\frac{\hmu(\cS)}{\chi_\mu}.
$$
\ethm

\begin{proof}
What we want to prove is that:
$$
\lim_{r\to 0}\frac{\log\mu_\l\(B_{\pi_\l}(\om,r)\)}{\log r}
=\frac{\hmu(\cS)}{\chi_\mu}.
$$
Fix $K>1$. Fix also $(\l,\om)\in\La\times E^\infty$. Consider any $r\in\(0,K^{-1} \big|\(\phi_{\om_1}^\l\)'(\pi_{\th(\l)} (\sg(\om)))\big|\)$. There then exists a largest $n\ge 0$ such that
$$
r\le K^{-n}\big|\(\phi_{\om|_n}^\l\)'(\pi_{\th^n(\l)}(\sg^n(\om)))\big|R_K.
$$
Then for $n\ge 1$,
$$
B_{\pi_\l}(\om,r)
\sbt B_{\pi_\l}\(\om,K^{-n}\big|\(\phi_{\om|_n}^\l\)'(\pi_{\th^n(\l)}(\sg^n(\om)))\big|R_K\), \ \ \text{and} 
$$
$$
r\ge K^{-(n+1)}\big|\(\phi_{\om|_{n+1}}^\l\)'(\pi_{\th^{n+1}(\l)}(\sg^{n+1}(\om)))\big|R_K.
$$
Therefore,
$$
\begin{aligned}
\frac{\log\mu_\l(\(B_{\pi_\l}(\om,r)\)}{\log r}
&\ge \frac{\log\mu_\l(B_{\pi_\l}\(\om,K^{-n}\big|\(\phi_{\om|_n}^\l\)'(\pi_{\th^n(\l)}(\sg^n(\om)))\big|R_K\)\)}{\log r} \\
&\ge \frac{\log\mu_\l(B_{\pi_\l}\(\om,K^{-n}\big|\(\phi_{\om|_n}^\l\)'(\pi_{\th^n(\l)}(\sg^n(\om)))\big|R_K\)\)}
{-(n+1)\log K+\log\big|\(\phi_{\om|_{n+1}}^\l\)'(\pi_{\th^{n+1}(\l)}(\sg^{n+1}(\om)))\big|+\log R_K} \\
&= \frac{\frac1n\log\mu_\l(B_{\pi_\l}\(\om,K^{-n}\big|\(\phi_{\om|_n}^\l\)'(\pi_{\th^n(\l)}(\sg^n(\om)))\big|R_K\)\)}
{-(1+\frac1n)\log K+\frac1n\log\big|\(\phi_{\om|_{n+1}}^\l\)'(\pi_{\th^{n+1}(\l)}(\sg^{n+1}(\om)))\big|+\frac1n\log R_K}.
\end{aligned}
$$
Hence, applying formula \eqref{1ie17} from Lemma~\ref{l2ie15}, and also Lemma~\ref{l2ie27}, we get
$$
\varliminf_{r\to 0}\frac{\log\mu_\l(\(B_{\pi_\l}(\om,r)\)}{\log r}
\ge \frac{\hmu(\cS)}{\log K+\chi_\mu}
$$
for all $(\l,\om)$ in some measurable set $\Om_K^+\sbt \La\times E^\infty$ with $\mu(\Om_K^+)=1$. Then 
$$
\mu\lt(\Om^+:=\bi_{j=1}^\infty \Om_{\frac{j+1}j}^+\rt)=1
$$
and 
\beq\label{1ie29}
\varliminf_{r\to 0}\frac{\log\mu_\l(\(B_{\pi_\l}(\om,r)\)}{\log r}
\ge \frac{\hmu(\cS)}{\chi_\mu}
\eeq
for all $(\l,\om)\in\Om^+$. For the proof of the opposite direction fix any $K>1$ so small that
\beq\label{2ie29}
K^{-1}>\ess\sup\big\{\big|\big|\(\phi_e^\l\)'\big|\big|:e\in E, \l\in\La\big\}.
\eeq
Having $(\l,\om)\in\La\times E^\infty$ fix any $r\in\(0,KR_K\ess\sup\big\{\big|\big|\(\phi_e^\l\)'\big|\big|:e\in E, \l\in\La\big\}$. Because of \eqref{2ie29} there exists a least $n\ge 1$ such that 
$$
K^n\big|\(\phi_{\om|_n}^\l\)'(\pi_{\th^n(\l)}(\sg^n(\om)))\big|R_K\le r.
$$
Then, because of our choice of $r$, we have that $n\ge 2$,
$$
K^{n-1}\big|\(\phi_{\om|_{n-1}}^\l\)'(\pi_{\th^{n-1}(\l)}(\sg^{n-1}(\om)))\big|R_K\le r,
$$
and 
$$
B_{\pi_\l}(\om,r)
\spt B_{\pi_\l}\(\om,K^n\big|\(\phi_{\om|_n}^\l\)'(\pi_{\th^n(\l)}(\sg^n(\om)))\big|R_K\).
$$
Therefore,
$$
\begin{aligned}
\frac{\log\mu_\l(\(B_{\pi_\l}(\om,r)\)}{\log r}
&\le \frac{\log\mu_\l(B_{\pi_\l}\(\om,K^n\big|\(\phi_{\om|_n}^\l\)'(\pi_{\th^n(\l)}(\sg^n(\om)))\big|R_K\)\)}{\log r} \\
&\ge \frac{\log\mu_\l(B_{\pi_\l}\(\om,K^n\big|\(\phi_{\om|_n}^\l\)'(\pi_{\th^n(\l)}(\sg^n(\om)))\big|R_K\)\)}
{(n-1)\log K+\log\big|\(\phi_{\om|_{n-1}}^\l\)'(\pi_{\th^{n-1}(\l)}(\sg^{n-1}(\om)))\big|+\log R_K} \\
&= \frac{\frac1n\log\mu_\l(B_{\pi_\l}\(\om,K^n\big|\(\phi_{\om|_n}^\l\)'(\pi_{\th^n(\l)}(\sg^n(\om)))\big|R_K\)\)}
{-(1-\frac1n)\log K+\frac1n\log\big|\(\phi_{\om|_{n-1}}^\l\)'(\pi_{\th^{n-1}(\l)}(\sg^{n-1}(\om)))\big|+\frac1n\log R_K}.
\end{aligned}
$$
Hence, applying formula \eqref{1ie17} from Lemma~\ref{l2ie15}, and also Lemma~\ref{l2ie27}, we get
$$
\varlimsup_{r\to 0}\frac{\log\mu_\l(\(B_{\pi_\l}(\om,r)\)}{\log r}
\le \frac{\hmu(\cS)}{-\log K+\chi_\mu}
$$
for all $(\l,\om)$ in some measurable set $\Om_K^-\sbt \La\times E^\infty$ with $\mu(\Om_K^-)=1$. Then we have:
$$
\mu\lt(\Om^-:=\bi_{j=k}^\infty \Om_{\frac{j+1}j}^-\rt)=1,
$$
where $k\ge 1$ is taken to be so large that $\frac{k+1}k\ess\sup\big\{\big|\big|\(\phi_e^\l\)'\big|\big|:e\in E, \l\in\La\big\}<1$. Also, 
$$
\varlimsup_{r\to 0}\frac{\log\mu_\l(\(B_{\pi_\l}(\om,r)\)}{\log r}
\le \frac{\hmu(\cS)}{\chi_\mu}
$$
for all $(\l,\om)\in\Om^-$. Along with \eqref{1ie29} this yields $\mu\(\Om^+\cap\Om^-\)=1$ and moreover,
$$
\lim_{r\to 0}\frac{\log\mu_\l(\(B_{\pi_\l}(\om,r)\)}{\log r}
=\frac{\hmu(\cS)}{\chi_\mu}
$$
for all $(\l,\om)\in\Om^+\cap\Om^-$, which gives therefore the required  dimensional exactness.
\end{proof}

From the above Theorem \ref{l3ie27} and Theorem \ref{t6.6.4.} we obtain the following result, giving the (common) Hausdorff dimension and packing dimension of the projections $\mu_\lambda\circ \pi_\lambda^{-1}$ on the random limit sets $J_\lambda$: 

\begin{cor}\label{ptwdim}
In the above setting  if $\mu$ is a $\theta\times \sigma$-invariant probability on $\La \times E^\infty$ whose marginal on $\La$ is $m$, and if $\H_\mu\(\pi_{E^\infty}^{-1}(\xi)|\pi_\La^{-1}(\e_\La)\)<\infty$, then for $m$-a.e $\lambda \in \La$, we have $$
\HD(\mu_\lambda \circ \pi_\lambda^{-1}) 
= \PD(\mu_\lambda \circ \pi_\lambda^{-1}) 
= \frac{h_\mu(\mathcal{S})}{\chi_\mu}
$$
\end{cor}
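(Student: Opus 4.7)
The plan is to derive this corollary as a direct consequence of Theorem~\ref{l3ie27} and Theorem~\ref{t6.6.4.}(ii). The only substantive task is to recast the pointwise limit established in Theorem~\ref{l3ie27} as the (classical) pointwise dimension of the pushforward measure $\mu_\l\circ\pi_\l^{-1}$ on $J_\l$, then conclude exact dimensionality, and finally invoke the general fact that an exact dimensional finite Borel measure has coinciding Hausdorff, packing, and pointwise dimensions.

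First I would record the tautological identity
$$
\mu_\l\(B_{\pi_\l}(\om,r)\)=\(\mu_\l\circ\pi_\l^{-1}\)\(B(\pi_\l(\om),r)\),
$$
which is immediate from the definition $B_{\pi_\l}(\om,r)=\pi_\l^{-1}(B(\pi_\l(\om),r))$. Substituting it into the conclusion of Theorem~\ref{l3ie27} yields that, for $\mu$-a.e.\ $(\l,\om)\in\La\times E^\infty$, the pointwise dimension of the pushforward measure $\mu_\l\circ\pi_\l^{-1}$ at the point $\pi_\l(\om)\in J_\l$ exists and equals $\hmu(\cS)/\chi_\mu$. Crucially, the value of this limit depends on $\om$ only through its image $\pi_\l(\om)$.

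Next, using the Rokhlin disintegration $\mu=\int_\La\mu_\l\,dm(\l)$ (available because by hypothesis $\mu\circ\pi_\La^{-1}=m$) together with Fubini, I would pass from a $\mu$-a.e.\ statement on $\La\times E^\infty$ to the statement that, for $m$-a.e.\ $\l\in\La$, the same equality holds for $\mu_\l$-a.e.\ $\om\in E^\infty$. Pushing this forward under $\pi_\l$ and using the above $\om$-independence, one obtains that for $m$-a.e.\ $\l$,
$$
d_{\mu_\l\circ\pi_\l^{-1}}(x)=\frac{\hmu(\cS)}{\chi_\mu}
\text{ \ for \ } \(\mu_\l\circ\pi_\l^{-1}\)\text{-a.e. } x\in J_\l,
$$
which is exactly the assertion that $\mu_\l\circ\pi_\l^{-1}$ is an exact dimensional finite Borel measure on $\R^q$ of dimension $\hmu(\cS)/\chi_\mu$. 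Applying Theorem~\ref{t6.6.4.}(ii) to this measure then yields at once $\HD(\mu_\l\circ\pi_\l^{-1})=\PD(\mu_\l\circ\pi_\l^{-1})=\hmu(\cS)/\chi_\mu$ for $m$-a.e.\ $\l\in\La$. I do not foresee any genuine obstacle here: all the analytic difficulty (controlling shrinking balls through the random, countably infinite, overlapping iteration) has already been absorbed into the proofs of Theorem~\ref{l3ie27} and its preparatory lemmas, so this step reduces to repackaging that output in measure-theoretic language and quoting the off-the-shelf exact-dimensionality principle.
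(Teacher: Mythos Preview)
Your proposal is correct and follows exactly the route the paper takes: the paper simply states that the corollary follows ``from the above Theorem~\ref{l3ie27} and Theorem~\ref{t6.6.4.}'', and your write-up merely fills in the routine repackaging (the identity $\mu_\l(B_{\pi_\l}(\om,r))=(\mu_\l\circ\pi_\l^{-1})(B(\pi_\l(\om),r))$, the Fubini/disintegration passage from $\mu$-a.e.\ to $m$-a.e., and the observation that the limit depends on $\om$ only through $\pi_\l(\om)$). There is no substantive difference in approach.
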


\

\section{Classes of random countable IFS with overlaps.} 

In this section we will study several classes of examples of random countable IFS with overlaps, and their invariant measures.

\

\textbf{4.1. Randomizations related to Kahane-Salem sets.} \

\

In \cite{KS} Kahane and Salem studied the convolution of infinitely many Bernoulli distributions, namely the measure $\mu= B(\frac{x}{ r_0}) * B(\frac{x}{r_1})* \ldots$, where $B(x)$ denotes the Bernoulli probability supported only at the points $-1, +1$ and giving measure $\frac 12$ to each one of them. The support of $\mu$ is the set $F$ of points of the form $\epsilon_0 r_0 + \epsilon_1 r_1 + \ldots$, where $\epsilon_k$ is equal to $+1$ or $-1$ with equal probabilities. If we assume  $\mathop{\sum}\limits_0^\infty r_k = 1$, and if we introduce the sequence $(\rho_n)_{n\ge 0}$ defined by 
$$
r_0 = 1-\rho_0, r_1 = \rho_0(1-\rho_1), r_2 = \rho_0\rho_1(1-\rho_2), \ldots,
$$
then it can be seen that, if $\rho_k > \frac 12$ for all but finitely many $k$s, then $F$ contains intervals. If, on the other hand,  $\rho_k < \frac 12$ for all $k \ge 0$, then $F$ is a Cantor set. If in addition to this, $\mathop{\lim}\limits_{k \to \infty} 2^k \rho_0\ldots \rho_{k-1} = 0$, then $F$ has zero Lebesgue measure and $\mu$ is singular. 

\fr A particular though interesting case is when $r_k = \rho^k, k \ge 0$, for some $\rho \in (0, 1)$. Then the corresponding set $F=F_\rho$ is the set of real numbers of type $\pm1 \pm \rho \pm \rho^2 \pm \ldots$. If $\rho < \frac 12$, then $F_\rho$ has zero Lebesgue measure and $\mu^{(\rho)}$ is singular; \ if $\rho> \frac 12$, then $F_\rho$ contains intervals. The convolution $\mu^{(\rho)}$ is equal to the invariant probability of the IFS with two contractions 
$$
\phi_1(x) = \rho x +1, \ \phi_2(x) = \rho x-1,
$$
taken with probabilities $1/2$, $1/2$. This is a conformal system with overlaps, and $F_\rho$ is equal to  the limit set $J_\rho$ of this IFS. The measure $\mu^{(\rho)}$ is the projection $\nu_{(1/2,1/2)}\circ\pi^{-1}$ of the probability $\nu_{(1/2, 1/2)}$ from $\{1, 2\}^\N$, through the canonical projection $\pi: \{1, 2\}^\N \to J_\rho$. In \cite{E} Erd\"os proved that when $1/\rho$ 
is a Pisot number (i.e a real algebraic integer greater than 1 so that all its conjugates are less than 1 in absolute value), then the measure $\mu^{(\rho)}$ is singular. In \cite{puaffine} it was shown that its Hausdorff dimension is strictly smaller than $1$. In the other direction, B. Solomyak showed in \cite{So} that $\mu^{(\rho)}$ is absolutely continuous for a.e $\rho \in [1/2, 1)$. 

\

Here we will give several ways to extend and randomize the idea of this construction, and will apply our results on pointwise dimensions of projection measures for random infinite IFS with overlaps:

\

\textbf{Random system 4.1.1}

A  type of random IFS can be obtained by fixing numbers $r_1, r_2 \in (0, 1)$, letting $\Lambda = \{1, 2\}^\Z$, $\theta: \Lambda \to \Lambda$ be the shift homeomorphism, and setting $E = \{1, 2\}$ so the alphabet is finite in this case. For arbitrary $\lambda=(\ldots, \lambda_{-1}, \lambda_0, \lambda_1, \ldots) \in \La$ and $e \in E$, consider then the affine contractions $\phi_e^\lambda$ in one real variable, defined by:
\begin{equation}\label{pm}
\phi_1^\lambda(x) = r_{\lambda_0} x + 1, \ \ \phi_2^\lambda(x) = r_{\lambda_0} x - 1
\end{equation} 

\fr Then, for arbitrary $\lambda = (\ldots, \lambda_{-1}, \lambda_0, \lambda_1, \ldots\} \in \{1, 2\}^\Z$,  the corresponding fractal limit set is $$J_\lambda := \pi_\lambda(E^\infty) = \{\phi_{\omega_1}^\lambda \circ \phi_{\omega_2}^{\theta(\lambda)} \circ \ldots, \ \omega = (\omega_1, \omega_2, \ldots)  \in E^\infty\},$$ which can actually be described as a set of type 
$$
\lt\{\pm 1 + \mathop{\sum}\limits_{i\ge 1} \mathop{\sum}\limits_{(j, k) \in Z_i}\pm \rho_1^k\rho_2^j\rt\},
$$ 
where for any pair of positive integers $(j, k) \in Z_i$ we have $j+k = i, \ i \ge 1$, and where the sets $Z_i$ are prescribed by the parameter $\lambda \in \{1, 2\}^\Z$, while the signs $\pm$ are arbitrary.

\fr We then consider the 1-sided shift space $E^\infty$, and a Bernoulli measure $\nu = \nu_Q$ on $E^\infty$ given by a probability vector $Q = (q_1, q_2)$. Let also a Bernoulli measure $m = m_P$ on $\Lambda$ associated to the probability vector $P = (p_1, p_2)$, and the probability $\mu = m \times \nu$ on $\La \times E^\infty$. The above random finite IFS is denoted by $\cS$.

\fr Next, by desintegrating $\mu$ into conditional measures $\mu_\lambda$,  and projecting $\mu_\lambda$ to the limit set $\J_\lambda$, we obtain the projection measure $\mu_\lambda \circ \pi_\lambda, \ \lambda \in \La$. In this case the finiteness condition of entropy from the statement of Theorem \ref{l3ie27} is clearly satisfied since $E$ is finite, so we obtain the exact dimensionality of the measures $\mu_\lambda \circ \pi_\lambda^{-1}$ on $J_\lambda$ for $m$-almost all $\lambda \in \La$. And from Corollary \ref{ptwdim} and Theorem \ref{ab}, we obtain 

\begin{cor}\label{4.1.1}
In the setting of 4.1.1, we obtain the following upper estimate for the pointwise (and Hausdorff, packing) dimensions of the projection measures, for $\mu$-almost all $(\lambda, \omega) \in  \Lambda \times E^\infty$:
$$d_{\mu_\lambda \circ \pi_\lambda^{-1}}(\pi_\lambda(\omega)) = \frac{h_\mu(\cS)}{\chi_\mu} \le \frac{h(m_P) + h(\nu_Q)}{- p_1 \log r_1 - p_2 \log r_2}= \frac{p_1 \log p_1 + p_2 \log p_2 + q_1 \log q_1 + q_2 \log q_2}{p_1 \log r_1 + p_2 \log r_2}$$
\end{cor}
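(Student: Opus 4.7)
The plan is to assemble previously established results. First, I would verify the finite-entropy hypothesis of Theorem~\ref{l3ie27}: since the alphabet $E=\{1,2\}$ is finite, the conditional entropy $\H_\mu(\pi_{E^\infty}^{-1}(\xi)\,|\,\pi_\La^{-1}(\e_\La))$ is bounded above by $\log 2$. Combined with the fact that $\mu=m_P\times\nu_Q$ is $\th\times\sg$-invariant with marginal $m_P$ on $\La$, Corollary~\ref{ptwdim} then yields both the exact dimensionality and the identity
\[
d_{\mu_\l\circ\pi_\l^{-1}}(\pi_\l(\om))=\frac{h_\mu(\cS)}{\chi_\mu}
\]
for $\mu$-a.e.\ $(\l,\om)$, which is the equality part of the statement.

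Next, I would bound the numerator. Theorem~\ref{ab}(a) gives $h_\mu(\cS)\le h(\mu)$. Because $\mu$ is the product of two Bernoulli measures and $\th\times\sg$ acts coordinatewise, the classical product formula for Kolmogorov--Sinai entropy yields
\[
h(\mu)=h_{m_P}(\th)+h_{\nu_Q}(\sg)=-(p_1\log p_1+p_2\log p_2)-(q_1\log q_1+q_2\log q_2).
\]

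For the denominator I would compute $\chi_\mu$ directly. Each contraction $\phi_e^\l$ is affine with slope $r_{\l_0}$, independent of the letter $e\in E$ and of the base point, so the integrand defining $\chi_\mu$ collapses to $-\log r_{\l_0}$. Since the marginal of $m_P$ on the zeroth coordinate equals $(p_1,p_2)$, this gives
\[
\chi_\mu=-\int_\La\log r_{\l_0}\,dm_P(\l)=-(p_1\log r_1+p_2\log r_2).
\]
Dividing the entropy bound by $\chi_\mu$ and simplifying signs then produces the inequality claimed. I do not anticipate a genuine obstacle: the affine nature of the contractions and the product form of $\mu$ make both the entropy and the Lyapunov integral completely explicit, so the only non-trivial ingredients are Theorem~\ref{l3ie27} and Theorem~\ref{ab}(a), both already at our disposal. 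The only small point to keep in mind is that $p_1\log r_1+p_2\log r_2<0$, so the numerator and denominator of the final ratio carry the same sign and the bound is positive, as it must be.
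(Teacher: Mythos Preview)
Your proposal is correct and follows essentially the same route as the paper: verify the finite-entropy hypothesis (trivial since $E$ is finite), invoke Corollary~\ref{ptwdim} for the equality $d_{\mu_\l\circ\pi_\l^{-1}}=h_\mu(\cS)/\chi_\mu$, bound the numerator via Theorem~\ref{ab}(a) and the product-entropy formula, and compute $\chi_\mu$ directly from the affine slope $r_{\l_0}$. The paper's argument is a one-line reference to Corollary~\ref{ptwdim} and Theorem~\ref{ab}, while you spell out the entropy and Lyapunov computations more explicitly, but there is no substantive difference.
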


\

Also, another possibility is to take  $\mu = m \times \nu$ on $\Lambda \times E^\infty$, where $m = m_P$ as before and $\nu$ is an equilibrium measure of a H\"older continuous potential on the 1-sided shift space $E^\infty$. 

\

 \textbf{Random system 4.1.2}

Consider now a fixed sequence $\bar \rho= (\rho_i)_{i \ge 1}$ of numbers in $(0, 1)$ which are smaller than some fixed $\rho \in (0, 1)$, and let the parameter space $\Lambda = \{1, 2, \ldots\}^\Z$ with the shift homeomorphism $\theta: \La \to \La$. Let also an infinite probability vector $P= (p_1, p_2, \ldots)$, and the $\theta$-invariant Bernoulli measure $m_P$ on $\La$ satisfying $m_P([i]) = p_i, \ i \ge 1$, where $[i]:= \{\omega = (\ldots, \omega_{-1}, \omega_0, \omega_1, \ldots), \omega_0 = i\}, \ i \ge 1$, and $h(\nu_P) < \infty$.
Let us take then the set $E:= \{1, 2, \ldots\}$ and a $(\theta\times\sigma)$-invariant probability measure $\mu$ on $\Lambda \times E^\infty$, having its marginal on $\Lambda$ equal to $m_P$. For example we can take $\mu = m_P \times \nu_Q$, where $Q= (q_1, q_2, \ldots)$ is a probability vector, and where $\nu_Q([j]) = q_j, j \ge 1$ is a $\sigma$-invariant Bernoulli probability on $E^\infty$; we assume in addition that the entropy of $\nu_Q$ is finite, i.e that $$-\sum_{j\ge 1} q_j \log q_j < \infty$$

\fr We now define infinitely many contractions $\phi_e^\lambda$ on a fixed large enough compact interval $X$, for arbitrary $e \in E, \lambda = (\ldots, \lambda_{-1}, \lambda_0, \lambda_1, \ldots) \in \La$, \ $\lambda_i \in \{1, 2\ldots\}, i \in \Z$,  by:
$$\phi_{n}^\lambda(x) = \rho_{\lambda_n} \cdot x +(-1)^{\lambda_0},  \ n \ge 1$$ 
It is clear that $\phi_e^\lambda$ are conformal contractions and they satisfy Bounded Distortion Property. We construct thus a random infinite IFS denoted by $\cS(\bar \rho)$, which has overlaps.  

For every $\lambda \in \La$, we construct then the fractal limit set $J_\lambda:= \pi_\lambda(E^\infty)$, which may be non-compact. The fractal $J_\lambda$ is the set of points given as $\phi^\lambda_{\omega_1} \circ \phi^{\theta(\lambda)}_{\omega_2} \circ \ldots$, for all $\omega \in E^\infty$. The main difference from the previous  example 4.1.1 is that now, the plus and minus signs in the series giving the points of $J_\lambda$ are \textit{not arbitrary}, instead they are determined by $\lambda = (\ldots, \lambda_{-1}, \lambda_0, \lambda_1, \ldots)  \in \La$. The randomness in the series comes now from the various possibilities to choose the sequences $\omega =(\omega_0, \omega_1, \ldots)  \in E^\infty$. Thus,
$$J_\lambda = \{(-1)^{\lambda_0} + (-1)^{\lambda_1}\rho_{\lambda_{\omega_1}} + (-1)^{\lambda_2}\rho_{\lambda_{\omega_1}} \rho_{\lambda_{\omega_2}} + \ldots + x \cdot \rho_{\lambda_{\omega_1}} \rho_{\lambda_{\omega_2}} \ldots, \  \ \omega_i \in \N^*, i \ge 0\}$$

\fr Given the $(\theta\times \sigma)$-invariant probability measure $\mu = m_P \times \nu_Q$, we see from Remark \ref{finiteent} that the condition $H_\mu(\pi_{E^\infty}^{-1}(\xi) | \pi_\Lambda^{-1}(\epsilon_\Lambda)) < \infty$ is satisfied. 
For arbitrary $\lambda \in \Lambda$, we now take the projection measure $\mu_\lambda\circ \pi_\lambda^{-1}$ on $J_\lambda$.
\ Therefore, from Theorem \ref{l3ie27} and Corollary \ref{ptwdim} we obtain that for $m_P$-almost all $\lambda \in \La$, the measure $\mu_\lambda\circ\pi_\lambda^{-1}$ is exact dimensional and its pointwise dimension has a common value equal  to $h_\mu(\cS(\bar \rho))/\chi_\mu$, where in our case the Lyapunov exponent of $\mu$ with respect to the random infinite  system $\cS(\bar \rho)$ is equal to:
$$
\begin{aligned}
\chi_\mu 
&= -\int_{\La \times E^\infty} \log \rho_{\lambda_{\omega_1}} d\mu(\lambda, \omega) = -\sum_{i \ge 1} q_i \int \log \rho_{\lambda_i} dm_P(\lambda)\\
& = -\sum_{i, j \ge 1} p_j q_i \log \rho_j = -\sum_{j \ge 1} p_j \log \rho_j.
\end{aligned}
$$ 
Moreover we have from Theorem \ref{ab} that the random projectional entropy of $\mu$ satisfies  
$$
h_\mu(\cS(\bar \rho)) \le h(\mu) = h(\mu_P) + h(\nu_Q) = -\sum_{i \ge 1} p_i\log p_i - \sum_{j \ge 1} q_j \log q_j. 
$$ 
 
\begin{cor}\label{4.1.2}
In the setting of 4.1.2, we obtain a concrete upper estimate for the pointwise (and Hausdorff, packing) dimension of $\mu_\lambda\circ \pi_\lambda^{-1}$, for $\mu$-almost all $(\lambda, \omega)\in \Lambda \times E^\infty$:
$$
d_{\mu_\lambda\circ \pi_\lambda^{-1}}(\pi_\lambda(\omega)) \le \frac{ \mathop{\sum}\limits_{i \ge 1} p_i\log p_i +\mathop{\sum}\limits_{j \ge 1} q_j \log q_j}{\mathop{\sum}\limits_{j \ge 1} p_j \log \rho_j}
$$
\end{cor}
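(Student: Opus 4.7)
The plan is to combine Theorem~\ref{l3ie27}, which gives dimensional exactness with value $h_\mu(\cS(\bar\rho))/\chi_\mu$, with the upper bound $h_\mu(\cS(\bar\rho))\le h(\mu)$ from Theorem~\ref{ab}(a), and then simply compute both $h(\mu)$ and $\chi_\mu$ explicitly in the product setting $\mu=m_P\times \nu_Q$.

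First I would verify the standing hypothesis $\H_\mu\(\pi_{E^\infty}^{-1}(\xi)|\pi_\La^{-1}(\e_\La)\)<\infty$. This is already taken care of by Remark~\ref{finiteent}: since $\mu=m_P\times\nu_Q$ with $h(\nu_Q)=-\sum_{j\ge 1}q_j\log q_j<\infty$, the argument there yields $\H_\mu(\tilde\xi|\mathcal A)=h(\nu_Q)<\infty$. Therefore Theorem~\ref{l3ie27} applies and delivers the pointwise limit
\[
d_{\mu_\l\circ\pi_\l^{-1}}(\pi_\l(\om))=\frac{\hmu(\cS(\bar\rho))}{\chi_\mu}
\]
for $\mu$-a.e.\ $(\l,\om)$, with the common value being the Hausdorff and packing dimension of $\mu_\l\circ\pi_\l^{-1}$ by Corollary~\ref{ptwdim}.

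Next I would evaluate $\chi_\mu$ directly from its definition. Because $\(\phi_{\om_1}^\l\)'(x)=\rho_{\l_{\om_1}}$ is independent of the point of evaluation and depends only on the coordinates $\om_1$ and $\l_{\om_1}$, and because $\mu=m_P\times\nu_Q$ is a product of Bernoulli measures,
\[
\chi_\mu=-\int\log\rho_{\l_{\om_1}}\,d\mu(\l,\om)
=-\sum_{i\ge 1}q_i\int\log\rho_{\l_i}\,dm_P(\l)
=-\sum_{j\ge 1}p_j\log\rho_j,
\]
as already noted in the discussion before the statement. This quantity is strictly positive since each $\rho_j\in(0,1)$.

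For the numerator, Theorem~\ref{ab}(a) gives $h_\mu(\cS(\bar\rho))\le h(\mu)$, and since $\mu$ is a product of two Bernoulli measures one has $h(\mu)=h(m_P)+h(\nu_Q)=-\sum_{i\ge 1}p_i\log p_i-\sum_{j\ge 1}q_j\log q_j$. Plugging these two expressions into the formula from Theorem~\ref{l3ie27}, and noting that both numerator and denominator are positive so the inequality is preserved, yields exactly the claimed bound
\[
d_{\mu_\l\circ\pi_\l^{-1}}(\pi_\l(\om))\le\frac{\sum_{i\ge 1}p_i\log p_i+\sum_{j\ge 1}q_j\log q_j}{\sum_{j\ge 1}p_j\log\rho_j}.
\]
There is no serious obstacle: the whole statement is an essentially mechanical unwinding of the already established general machinery in the product Bernoulli setting. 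The only point deserving care is tracking the signs, so that the inequality for the positive quantity $h_\mu/\chi_\mu$ translates correctly into the displayed ratio of two quantities that are each individually negative.
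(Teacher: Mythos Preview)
Your proposal is correct and follows exactly the same route as the paper: verify the finite-entropy hypothesis via Remark~\ref{finiteent}, apply Theorem~\ref{l3ie27} to get the exact dimension $h_\mu(\cS(\bar\rho))/\chi_\mu$, compute $\chi_\mu=-\sum_j p_j\log\rho_j$ directly, and bound the numerator by $h(\mu)=h(m_P)+h(\nu_Q)$ using Theorem~\ref{ab}(a). Your remark about tracking the signs (both numerator and denominator in the displayed bound are negative) is a useful clarification that the paper leaves implicit.
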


\

\textbf{Random system 4.1.3}

Let us fix a sequence $\bar \rho = (\rho_0, \rho_1, \rho_2, \ldots)$ in $(0, 1)$, and $\Lambda= [1-\vp, 1+\vp]$ for some small $\vp >0$, together with a homeomorphism $\theta: \La \to \La$ which preserves an absolutely continuous probability $m$ on $\Lambda$. Let us take also the set $E = \{1, 2, \ldots\}$ and the $\sigma$-invariant Bernoulli measure $\nu$ on $E^\infty$ given by $\nu([i]) = \nu_i, i \ge 1$, where $(\nu_1, \nu_2, \ldots)$ is a probability vector. We assume also that $h(\nu) = -\mathop{\sum}\limits_{i\ge 1} \nu_i \log \nu_i  < \infty$. For arbitrary $e \in E$ and $\lambda \in \La$, we now define the sequence of parametrized contractions: 
$$
\phi_{2n+1}^\lambda(x) = \lambda \rho_n x +1, \ \ \phi_{2n+2}^\lambda(x) = \lambda\rho_n x -1, \ n \ge 0.
$$
By considering also the $(\theta\times\sigma)$-invariant probability $\mu = m \times \nu$ we obtain the random infinite IFS with overlaps $\cS(\bar \rho)$.

\fr  The corresponding limit set $J_\lambda:= \pi_\lambda(E^\infty)$ can be thought of as the set determined, for $\lambda \in \Lambda$,  in the following way:  $$J_\lambda= \{\pm 1 \pm \lambda \rho_{i_1} \pm \lambda^2 \rho_{i_1}\rho_{i_2} \pm \ldots,  \text{for all sequences of positive integers} \ \omega = (i_1, i_2, \ldots) \}$$
 The projection $(\pi_\lambda)_*\mu_\lambda = \mu_\lambda \circ \pi_\lambda^{-1}$ of the measure $\mu_\lambda$, is a probability measure on $J_\lambda$. We see that both  (\ref{bdp}) and the entropy condition $H_\mu(\pi_{E^\infty}^{-1} \xi \ \pi_\Lambda^{-1} \epsilon_\La) < \infty$, are satisfied in this case. 

\fr Hence, we can apply Theorem 
\ref{l3ie27} and Corollary \ref{ptwdim}, to obtain that for $m$-almost all parameters $\lambda \in [1-\vp, 1+ \vp]$, the projection measure $\mu_\lambda\circ \pi_\lambda^{-1}$ is exact dimensional, and that its Hausdorff dimension has a common value, which is  equal to $$HD(\mu_\lambda\circ \pi_\lambda^{-1}) = \frac{h_\mu(\cS(\bar \rho))}{\chi_\mu},$$
where the Lyapunov exponent of $\mu$ with respect to $\cS(\bar \rho)$ is given by:
$$\chi_\mu = -\int_{\La \times E^\infty} \log( \lambda\rho_{[\frac{\omega_1-1}{2}]}) \ d\mu(\lambda, \omega) = -\int_\La \log \lambda \ dm(\lambda) - \sum_{i \ge 0} (\nu_{2i+1} + \nu_{2i+2}) \log\rho_i$$
\fr From Theorem \ref{ab} we obtain an upper estimate for the random projectional entropy,  $h_\mu(\cS) \le h(m) -\sum_i \nu_i \log \nu_i$,  and an upper estimate for the pointwise dimension and Hausdorff dimension of $\mu_\lambda\circ \pi_\lambda^{-1}$. 

\begin{cor}\label{4.1.3}
 In the setting of 4.1.3, we obtain that for $\mu$-almost every $(\lambda, \omega) \in [1-\vp, 1+\vp] \times E^\infty$, 
$$d_{\mu_\lambda\circ \pi_\lambda^{-1}}(\pi_\lambda(\omega)) = HD(\mu_\lambda\circ \pi_\lambda^{-1}) \le \frac{h(m) -\sum_{i\ge 1} \nu_i \log \nu_i}{-\int_\La \log \lambda \ dm(\lambda) - \sum_{i \ge 0} (\nu_{2i+1} + \nu_{2i+2}) \log \rho_i}$$
\end{cor}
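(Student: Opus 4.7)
My plan is to assemble the general machinery (Theorem \ref{l3ie27}, Corollary \ref{ptwdim}, and part (a) of Theorem \ref{ab}) for the specific system $\cS(\bar\rho)$, in the same style as the proofs of Corollaries \ref{4.1.1} and \ref{4.1.2}. The very first thing is to check that the hypotheses of Corollary \ref{ptwdim} are satisfied here. The maps $\phi_{2n+1}^\lambda, \phi_{2n+2}^\lambda$ are affine with slope $\lambda \rho_n$, so $|(\phi_e^\lambda)'|$ is actually constant in the space variable; hence the Bounded Distortion Property \ref{bdp} holds trivially with $K(t)\equiv 1$, and each contraction factor is bounded above by $(1+\vp)\rho<1$ for $\vp$ small. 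For the finite entropy assumption $\H_\mu(\pi_{E^\infty}^{-1}(\xi)\,|\,\pi_\La^{-1}(\e_\La))<\infty$, I invoke Remark \ref{finiteent}: since $\mu=m\times\nu$ is a product measure with $h(\nu)=-\sum_i\nu_i\log\nu_i<\infty$ by assumption, the remark gives the required conditional entropy bound directly.

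Once the hypotheses are verified, Theorem \ref{l3ie27} and Corollary \ref{ptwdim} apply, so for $m$-a.e.\ $\lambda\in\La$ the projection measure $\mu_\lambda\circ\pi_\lambda^{-1}$ is exact dimensional and
\[
d_{\mu_\lambda\circ\pi_\lambda^{-1}}(\pi_\lambda(\om))
=\HD(\mu_\lambda\circ\pi_\lambda^{-1})
=\PD(\mu_\lambda\circ\pi_\lambda^{-1})
=\frac{h_\mu(\cS(\bar\rho))}{\chi_\mu}
\]
for $\mu$-a.e.\ $(\lambda,\om)$. The next step is to bound the numerator from above. By part (a) of Theorem \ref{ab},
\[
h_\mu(\cS(\bar\rho))\le h(\mu)=h(m)+h(\nu)=h(m)-\sum_{i\ge 1}\nu_i\log\nu_i,
\]
where I used that $m$ and $\nu$ are the marginals of the product measure $\mu$ under the commuting $\theta$ and $\sigma$, and that the $(\theta\times\sigma)$-entropy of a product decomposes as the sum of the factor entropies.

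The denominator is computed directly. Since $(\phi_{\om_1}^\lambda)'\equiv\lambda\rho_{[(\om_1-1)/2]}$, we have
\[
\chi_\mu=-\int_{\La\times E^\infty}\log\bigl|\lambda\rho_{[(\om_1-1)/2]}\bigr|\,d\mu(\lambda,\om)
=-\int_\La\log\lambda\,dm(\lambda)-\sum_{i\ge 0}(\nu_{2i+1}+\nu_{2i+2})\log\rho_i,
\]
using the product structure of $\mu=m\times\nu$ and $\nu([2i+1]\cup[2i+2])=\nu_{2i+1}+\nu_{2i+2}$. Dividing the entropy bound by $\chi_\mu$ yields the stated inequality. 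The only subtle point I foresee is verifying that $\chi_\mu$ is strictly positive and finite: positivity follows because all contraction ratios $\lambda\rho_n$ are strictly less than one (for $\vp$ small), so both terms in $\chi_\mu$ are non-negative and the first is strictly positive when $m$ is not concentrated at $\lambda=1$; finiteness of the series $\sum_i(\nu_{2i+1}+\nu_{2i+2})\log\rho_i$ follows from the assumption $h(\nu)<\infty$ together with $\rho_n\ge\rho_{\min}$ for some uniform lower bound, or else one interprets the inequality with the convention $a/\infty=0$ if that series diverges.
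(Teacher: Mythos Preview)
Your proposal is correct and follows essentially the same route as the paper's own argument preceding the corollary: verify BDP (trivial for affine maps) and the finite entropy hypothesis via Remark~\ref{finiteent}, invoke Theorem~\ref{l3ie27} and Corollary~\ref{ptwdim} for exact dimensionality and the formula $h_\mu(\cS(\bar\rho))/\chi_\mu$, bound the numerator by $h(\mu)=h(m)+h(\nu)$ using Theorem~\ref{ab}(a), and compute $\chi_\mu$ from the product structure of $\mu$. Your extra remarks on positivity and finiteness of $\chi_\mu$ go slightly beyond what the paper checks explicitly, but they do not alter the argument.
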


\fr If all the contraction factors $\rho_i$ are equal to some fixed $\rho$, then $J_\lambda$ is a perturbation of the set from the beginning of 4.1.

\

\textbf{4.2. Random continued fractions.}

\

 By the continued fraction $[a_1, a_2, \ldots]$ with digits $a_1, a_2, \ldots$, we understand the ratio $$\frac{1}{a_1+ \frac{1}{a_2 + \frac{1}{a_3+\ldots}}}$$
R. Lyons studied in \cite{L} random continued fractions $[1, X_1, 1, X_2, \ldots]$, where the random variables $X_i, i \ge 1$ are i.i.d and take the values $0, \alpha$ each with probability $1/2$, and where $\alpha$ is a fixed number in $(0, \infty)$.
Let $\nu_\alpha$ be the distribution of this random continued fraction. 
In fact the measure $\nu_\alpha$ is the invariant measure of the iterated function system $\tilde{\mathcal S}_\alpha = \{\phi_1^\alpha, \phi_2\} = \{\frac{x+\alpha}{x+\alpha+1}, \frac{x}{x+1}\}$, where the two generator maps are applied with equal probabilities.

 If $P_\alpha$ is the fixed point of $\phi^\alpha_1$, then $P_\alpha = \frac{-\alpha + \sqrt{\alpha^2+4\alpha}}{2}$, and it can be seen that $\phi_1^\alpha(0) > \phi_2(P_\alpha)$ if and only if $\alpha > \frac 12$. Thus for $\alpha > \frac 12$, the support of $\nu_\alpha$ is a Cantor set contained in $\phi_1^\alpha([0, P_\alpha)] \cup \phi_2([0, P_\alpha])$. If $\alpha \in (0, \frac 12)$, then there are strict overlaps and the  limit set of $\tilde{\mathcal S}_\alpha$ is the interval $[0, \frac{-\alpha + \sqrt{\alpha^2 + 4\alpha}}{2}]$.

Lyons showed in \cite{L} that the measure $\nu_\alpha$ is singular for all $\alpha \in (\alpha_c, \frac 12]$, where $\alpha_c \in (0.2688, 0.2689)$. 
Later, by employing a transversality condition, Simon, Solomyak and Urba\'nski \cite{SSU} made progress and showed  that $\nu_\alpha$ is absolutely continuous for Lebesgue-almost all $\alpha \in (0.215, \alpha_c)$. They left it open whether $\nu_\alpha$ is absolutely continuous or singular for small values of $\alpha$,  as transversality may fail in that case.  

In our paper, we do not use transversality, but view the original system $\tilde{\mathcal S}_\alpha$ as a random parabolic IFS with overlaps.
The system $\tilde{\mathcal S}_\alpha$ is not hyperbolic (the map $\phi_2$ is not contracting everywhere), hence we cannot apply directly our results above. However we can associate to it an \textit{infinite random IFS with overlaps} containing only contractions, using the jump transformation (\cite{Sch}).  
In this way, infinite random IFS of contractions with overlaps, will appear naturally in this situation. 
 
Our goal is to show that the invariant measure $\nu_\alpha$ of the IFS $\tilde{\mathcal S}_\alpha$ is \textit{exact dimensional}, and to give \textit{estimates} on its pointwise dimension.  Once we have exact dimensionality, it means that all fractal invariants of the measure (pointwise dimension, Hausdorff dimension, packing dimension) are the same.

\begin{thm}\label{lyons}
Consider the IFS $\tilde{\mathcal S}_\lambda = \{ \phi^\lambda_1, \phi_2\} = \{\frac{x+\lambda}{x+\lambda+1}, \frac{x}{x+1}\}$, and let the invariant measure $\nu_\lambda$ obtained by applying the maps of $\tilde{\mathcal S}_\lambda$ each with probability $\frac 12$. Denote also by $\mathcal S$ the associated infinite IFS obtained above by the jump transformation. 

\ a)  Then, for Lebesgue-almost all $\lambda \in [0, 1]$, the measure $\nu_\lambda$ is exact dimensional, and its pointwise (and Hausdorff, packing) dimension is equal to $\frac{h_\mu(\mathcal S)}{\chi_\mu}$, where $\mu = m \times \tilde \nu$ and $\tilde \nu$ is the Bernoulli measure on $\{0, 1, 2, \ldots\}^\infty$ associated to the vector $(\frac 12, \frac{1}{2^2}, \ldots)$. 

\ b) For Lebesgue-almost all $\lambda \in [\frac{-1+\sqrt 3}{2}, 0.5]$, the pointwise (and Hausdorff) dimension of $\nu_\lambda$ is larger than $\frac{1}{25}$. 

\end{thm}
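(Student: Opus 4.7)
The plan is to transfer the problem from the parabolic finite IFS $\tilde{\mathcal S}_\lambda$ to a hyperbolic countable IFS via a jump transformation, and then invoke Theorem \ref{l3ie27} and Theorem \ref{ab}. Set $E:=\{0,1,2,\ldots\}$, take $\Lambda:=[0,1]$ equipped with Lebesgue measure $m$, and for each $k\in E$ and $\lambda\in\Lambda$ define $\psi_k^\lambda:=\phi_2^k\circ\phi_1^\lambda$. Using the standard identity $\phi_2^k(x)=x/(kx+1)$, a direct computation gives
$$
\psi_k^\lambda(x)=\frac{x+\lambda}{(k+1)(x+\lambda)+1}, \qquad (\psi_k^\lambda)'(x)=\frac{1}{((k+1)(x+\lambda)+1)^2},
$$
so that on any $\Lambda\subset[\lambda_0,1]$ with $\lambda_0>0$ the maps $\psi_k^\lambda$ are uniform contractions of $X=[0,1]$, the family satisfies the Bounded Distortion Property of Property \ref{bdp}, and $\lambda\mapsto\psi_k^\lambda(x)$ is measurable. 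This yields a random countable conformal IFS $\mathcal S$ in the sense of Definition \ref{rccifs}.

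The next step is to identify the relevant measure. The Bernoulli measure $\tilde\nu$ on $E^\infty$ with weights $q_k=2^{-(k+1)}$ has finite entropy $h(\tilde\nu)=2\log 2$, and it is precisely the push-forward of the coin-flip Bernoulli measure on $\{1,2\}^\infty$ under the block coding that reads off the length of each maximal run of $\phi_2$'s terminated by a single $\phi_1^\lambda$. Consequently $\tilde\nu\circ\pi_\lambda^{-1}$ equals Lyons's distribution $\nu_\lambda$. Setting $\mu=m\times\tilde\nu$, Remark \ref{finiteent} guarantees $\H_\mu(\pi_{E^\infty}^{-1}(\xi)|\pi_\La^{-1}(\e_\La))=h(\tilde\nu)<\infty$. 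Part (a) now follows immediately: Theorem \ref{l3ie27} together with Corollary \ref{ptwdim} gives the exact dimensionality of $\nu_\lambda=\mu_\lambda\circ\pi_\lambda^{-1}$ for $m$-a.e. $\lambda$, with common value $h_\mu(\mathcal S)/\chi_\mu$ for the pointwise, Hausdorff and packing dimensions.

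For part (b), the strategy is to bound $\chi_\mu$ from above and $h_\mu(\mathcal S)$ from below on the range $\lambda\in[(-1+\sqrt 3)/2,\,1/2]$. From the derivative formula,
$$
\chi_\mu=2\int\log\bigl((\omega_1+1)(\pi_{\theta(\lambda)}(\sigma\omega)+\lambda)+1\bigr)\,d\mu(\lambda,\omega)\le 2\sum_{k\ge 0}2^{-(k+1)}\log(2k+3),
$$
which is an explicit finite constant. For the lower bound on $h_\mu(\mathcal S)$ one invokes Theorem \ref{ab}(b). The condition $\lambda\ge(-1+\sqrt 3)/2$ is equivalent to $\lambda(\lambda+1)\ge 1/2$, and a direct geometric analysis of the intervals $\psi_k^{\lambda'}([0,P_{\lambda'}])$ (using the monotonicity in $\lambda'$ of the endpoints $\phi_1^{\lambda'}(0)=\lambda'/(\lambda'+1)$ and $\phi_2^j(P_{\lambda'})$) shows that on this range a small ball around a typical point of the limit set is met by only a bounded number $N$ of cylinders $\psi_e^{\lambda'}([0,P_{\lambda'}])$, with $e$ drawn from a fixed finite set of indices, uniformly in $\lambda'\in\Lambda$. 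Plugging this into Theorem \ref{ab}(b) yields $h_\mu(\mathcal S)\ge h(\mu)-\log N=2\log 2-\log N$, and the explicit numerical values combine to give $h_\mu(\mathcal S)/\chi_\mu\ge 1/25$.

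The principal technical obstacle will be the overlap count demanded by Theorem \ref{ab}(b): one must verify, with no transversality input at all, that for Lebesgue-a.e. $\lambda\in[(-1+\sqrt 3)/2,1/2]$ and $\tilde\nu$-a.e. $\omega$, sufficiently small balls around $\pi_\lambda(\omega)$ are hit by only boundedly many cylinder images $\psi_e^{\lambda'}([0,P_{\lambda'}])$ as both $e\in E$ and $\lambda'\in\Lambda$ vary. The specific endpoint $(-1+\sqrt 3)/2$ is exactly what makes the overlap bound effective, because the inequality $\lambda(\lambda+1)\ge 1/2$ forces the second-generation images of the two parabolic branches to stay far enough apart that deep cylinders of $\mathcal S$ cannot accumulate near a single point of the limit set.
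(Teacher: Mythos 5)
Your part (a) is correct and follows the paper's own route exactly: jump transformation $\psi_n^\lambda=\phi_2^n\circ\phi_1^\lambda$, identification of $\nu_\lambda$ as the projection of the Bernoulli measure with weights $2^{-(n+1)}$ (the paper derives this from the self-similarity identity $\nu_\lambda=\frac12\phi^\lambda_{1*}\nu_\lambda+\frac12\phi_{2*}\nu_\lambda$ rather than by run-length coding, but the two arguments are interchangeable), finiteness of $h(\tilde\nu)=2\log2$, Remark \ref{finiteent}, then Theorem \ref{l3ie27} and Corollary \ref{ptwdim}.

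For part (b), however, your write-up stops exactly where the real work is, and you say so yourself: the ``direct geometric analysis'' producing the bounded overlap number $N$ is asserted, not carried out, and the final inequality $h_\mu(\mathcal S)/\chi_\mu\ge 1/25$ is asserted, not computed. This is not a cosmetic omission, because the value of $N$ is critical: since $h(\mu)=2\log 2$, Theorem \ref{ab}(b) gives $h_\mu(\mathcal S)\ge 2\log2-\log N$, which is vacuous as soon as $N\ge 4$; so ``some bounded $N$'' is not enough, you must actually get $N\le 3$. The paper does this concretely: from $\psi_n^\lambda(0)=\frac{1}{n+1+\frac1\lambda}$ and $\psi_n^\lambda(1)=\frac{1}{n+1+\frac{1}{\lambda+1}}$ one checks that $I_n^\lambda\cap I_{n\pm k}^\lambda\ne\emptyset$ forces $\lambda(\lambda+1)<\frac1k$, so for $\lambda>\frac{-1+\sqrt3}{2}$ (i.e.\ $\lambda(\lambda+1)>\frac12$) only immediate neighbours can overlap, each $I_n^\lambda$ meets at most three other intervals, and hence $h_\mu(\mathcal S)\ge 2\log2-\log3=\log\frac43>0$; combined with the explicit upper bound on $\chi_\mu$ this yields the (non-optimal) constant $\frac1{25}$. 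You correctly flag the additional subtlety that the hypothesis of Theorem \ref{ab}(b) involves cylinder images $\psi_e^{\lambda'}(J_{\lambda'})$ for \emph{all} $\lambda'\in\Lambda$, not just $\lambda'=\lambda$ — a point the paper glosses over — but flagging it is not resolving it: you would need to verify that the monotonicity of the endpoints in $\lambda'$ keeps the admissible index set of cardinality at most $3$ uniformly over $\lambda'\in[\frac{-1+\sqrt3}{2},\frac12]$, or shrink the parameter window accordingly. Until the explicit count $N\le3$ and the closing arithmetic (your bound $\chi_\mu\le 2\sum_{k\ge0}2^{-(k+1)}\log(2k+3)$ is fine and even repairs the paper's dropped factor $2$ from the squared derivative) are written down, part (b) is a plan rather than a proof.
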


\begin{proof}
In order to keep our notation for random systems, replace $\alpha$ by $\lambda$. We have the maps: $$\phi^\lambda_1(x) = \frac{x+\lambda}{x+\lambda+1}, \  \phi_2(x) = \frac{x}{x+1}$$
Consider $X = [0, 1], \Lambda = [0, 1]$, and the transformation  $\theta(\lambda) = \lambda$ on $\Lambda$, which invariates the Lebesgue measure $m$. 
We see that both $\phi^\lambda_1$ and $\phi_2$ are increasing, and that $\phi_1^\lambda(0) = \frac{\lambda}{\lambda+1}, \phi_1^\lambda(1) = \frac{\lambda+1}{\lambda+2}$, and $\phi_2(0) = 0, \phi_2^\lambda(1) = \frac 12$. Also the map $\phi^\lambda_1$ is a contraction for $\lambda >0$, but $\phi_2$ is not contracting. 
According to the jump transformation (\cite{Sch}), we can associate to our parabolic IFS $\tilde{\mathcal S}_\lambda$, a hyperbolic IFS $\mathcal S_\lambda = \{\psi^\lambda_n, n \ge 0\}$, formed by the transformations of $X$, 
$$\psi^\lambda_n = \phi_2^n \circ \phi_1^\lambda, \ n \ge 0$$
From the definition, it follows that the maps $\psi^\lambda_n$ are all contractions, and by induction on $n$, 
$$\psi^\lambda_n(x) = \frac{x+\lambda}{(n+1)(x+\lambda) +1}$$
We denote by $\mathcal{S}$ the random infinite IFS with overlaps, obtained above.
For $\lambda >0$, the measure $\nu_\lambda$ is the projection onto the limit set $J_\lambda$ of the Bernoulli measure on $\{1, 2\}^{\mathbb N}$, associated  to  $(\frac 12, \frac 12)$. If $\phi_*\mu$ denotes in general the push forward of a measure $\mu$ through a map $\phi$, then  $\nu_\lambda$ is the unique probability measure $\nu$ satisfying the condition: 
\begin{equation}\label{12}
\nu = \frac 12 \phi_{1*}^{\lambda}\nu + \frac 12 \phi_{2*}\nu
\end{equation}

This means that $\nu_\lambda$ satisfies also the equality:
$$\nu_\lambda = \frac 12 \phi_{1*}^\lambda \nu_\lambda + \frac{1}{2^2} (\phi_2\phi_1^\lambda)_*\nu_\lambda + \frac{1}{2^3} (\phi_2^2\phi_1^\lambda)_*\nu_\lambda + \ldots = \frac 12 \psi^\lambda_{0*}\nu_\lambda + \frac{1}{2^2}\psi^\lambda_{1*}\nu_\lambda + \frac{1}{2^3}\psi^\lambda_{2*} \nu_\lambda+ \ldots $$
So $\nu_\lambda$ satisfies the corresponding identity for 
$\mathcal S$ and for the probability vector $(\frac 12, \frac{1}{2^2}, \ldots)$, hence $\nu_\lambda$ it is the projection of the Bernoulli measure $\tilde \nu$ on $E^\infty$ associated to  $(\frac{1}{2}, \frac{1}{2^2}, \ldots)$, where $E = \mathbb N$. 

If we take now the measure $\mu = m \times \tilde \nu$ on $\Lambda \times E^\infty$, we have that $h(\mu) = h(\tilde \nu) = \mathop{\sum}\limits_{n \ge 1} \frac{\log 2^n}{2^n} = \log 2 \mathop{\sum}\limits_{n \ge 1} \frac{n}{2^n} = 2 \log 2 < \infty$. Then, Remark \ref{finiteent} applies, and we obtain from Theorem 
 \ref{l3ie27} that for $\mu$-almost all $(\lambda, \omega) \in \Lambda \times E^\infty$, the projection $\nu_\lambda$ is exact dimensional at the point $\pi_\lambda(\omega)$.
The Lyapunov exponent of the measure $\mu$ is defined by:
$$\chi_\mu = - \int_{\Lambda \times E^\infty} \log \|(\psi_{\omega_1}^\lambda)' (\pi_{\theta(\lambda)}(\sigma\omega)\| \ d\mu(\lambda, \omega)$$
From the above formula for $\psi^\lambda_n$, \  $(\psi^\lambda_n)'(x) = \frac{1}{[(n+1)(x+\lambda)+1]^2}$; thus, since $x \in [0, 1]$,
$$
\frac{1}{[\lambda(n+1) +n+2]^2}\le |((\psi^\lambda_n)'(x)| \le \frac{1}{[\lambda(n+1)]^2}
$$
Hence we obtain:
$$
\chi_\mu \le \mathop{\sum}\limits_{n \ge 1} \int_0^1 \frac{1}{2^n}\log[\lambda(n+1)+n+2] \ d\lambda \le \mathop{\sum}\limits_{n \ge 1} \frac{\log [2(n+2)]}{2^n}  = 
\log 2 + \mathop{\sum}\limits_{n \ge 1} \frac{\log (n+2)}{2^n} <\infty
$$
Therefore from Theorem  \ref{l3ie27} we obtain the \textit{lower estimate for the pointwise dimension},
\begin{equation}\label{le}
d_{\mu_\lambda\circ \pi_\lambda^{-1}}(\pi_\lambda(\omega)) = HD(\mu_\lambda\circ \pi_\lambda^{-1}) = \frac{h_\mu(\mathcal S)}{\chi_\mu} \ge \frac{h_\mu(\mathcal S)}{\log 2 + \mathop{\sum}\limits_{n \ge 1} \frac{\log (n+2)}{2^n}}
\end{equation}

\

We want now to estimate the random projectional entropy $h_\mu(\mathcal S)$ for certain values of $\lambda$, which will give estimates also the pointwise dimension. 
Firstly, we know that $\psi^\lambda_n(0) = \frac{\lambda}{\lambda(n+1) +1} = \frac{1}{n+1+\frac{1}{\lambda}}, \psi^\lambda_n(1) = \frac{\lambda+1}{(n+1)(\lambda+1) + 1} = \frac{1}{n+1+\frac{1}{\lambda+1}}$, and that clearly $(\psi^\lambda_n(0))_n$ and $(\psi^\lambda_n(1))_n$ are strictly decreasing sequences in $n$, as $\lambda >0$. 

We want to see what is the maximum number of intervals $I_j^\lambda:= \psi^\lambda_j([0, 1])$ that any given interval $I_n^\lambda$ intersects. Assuming that $I_{n+k}^\lambda$ intersects $I_n^\lambda$, it follows that $\frac{1}{n+k+1+\frac{1}{\lambda+1}} > \frac{1}{n+1+\frac{1}{\lambda}}$, thus $\lambda(\lambda+1) < \frac 1k$. Looking next at intervals of type $I^\lambda_{n-k'}$, we have an overlap between $I^\lambda_n$ and $I^\lambda_{n-k'}$ iff $\frac{1}{n-k'+1+\frac 1\lambda} < \frac{1}{n+1+\frac{1}{\lambda+1}}$, which means again that $\lambda(\lambda+1) < \frac{1}{k'}$. By combining, we obtain that $I^\lambda_{n}$ intersects $k + k'$ intervals $I^\lambda_j, j \ne n$, \ if $\lambda(\lambda+1) < \text{max}\{\frac{1}{k}, \frac{1}{k'}\}$. 
In particular, if $\lambda > \frac{-1+\sqrt{3}}{2}$,  then each interval $I_n^\lambda$ intersects strictly less than 4 other intervals $I^\lambda_j$. 

Let us take now the parameter space $\Lambda$ to be the interval $(\frac{-1+\sqrt{3}}{2}, \frac 12)$, with the normalized Lebesgue measure $m$. The transformation $\theta$ is again the identity on $\Lambda$, $E = \mathbb N$, and the measure $\nu$ on $E^\infty$ is the Bernoulli measure associated to the probability vector $(\frac 12, \frac{1}{2^2}, \ldots)$. Recall that the entropy $h(\nu)$ is equal to $2 \log2$. 

Since for $\lambda > \frac{-1+\sqrt 3}{2}$, each image $\psi^\lambda_n([0, 1])$ intersects at most 3 other images $\psi^\lambda_j([0, 1])$, it follows from Theorem \ref{ab} that the random projectional entropy $h_\mu(\mathcal S)$  can be estimated as: $$h_\mu(\mathcal S) \ge h(\mu) - \log 3 = 2\log 2 - \log 3 = \log \frac 43$$

On the other hand, from the definition of the random Lyapunov exponent, we have
\begin{align}
\chi_\mu &\le \mathop{\sum}\limits_{n \ge 1} \frac{1}{2^n( \frac 12 - \frac{-1+\sqrt 3}{2})} \int_{\frac{-1+\sqrt 3}{2}}^{\frac 12} \log[\lambda(n+1) + n+2]  d\lambda \le 10 \mathop{\sum}\limits_{n \ge 1} \frac{\log\frac 32 (n+2)]}{2^n}\\ &\le 10 (\log \frac 32 + \mathop{\sum}\limits_{n \ge 1} \frac{\log(n+2)}{2^n})
\end{align}
Thus the pointwise dimension is larger than $\frac{1}{25}$ for Leb-a.a $\lambda \in [\frac{-1+\sqrt 3}{2}, \frac 12]$. The value $\frac {1}{25}$ is not optimal.
\end{proof}

Another possibility, is to take  $\Lambda = [0, 1]$,  $\theta: \Lambda \to \Lambda$ to be an expanding smooth bijective map, and $m$  its absolutely continuous invariant probability measure on $[0, 1]$. We can then form the random system $\mathcal{S}$ and the measure $\nu_\lambda$ as before. By applying our results, we obtain that $\nu_\lambda$ is exact dimensional, and we can estimate its Hausdorff (and packing) dimension.

We make the observation that our method can be applied also to other random continued fractions, whose digits take values in some fixed set.
 
\

\textbf{4.3. Random infinite IFS with bounded number of overlaps in the plane.} 

\

In Example 5.11 of \cite{MU}, we gave an example of a deterministic infinite IFS defined as follows:  \
let $X = \bar B(0, 1) \subset \R^2$ be the closed unit disk and for $n \ge 1$ take $C_n$ to be the circle centered at the origin and having radius $r_n \in (0, 1), \ r_n \mathop{\nearrow}\limits_{n \to \infty} 1$. 
For each $n\ge 1$ we cover the circle $C_n$ with closed disks $D_n(i), i\in K_n$, of the same
radius $r_n'$, where $K_n$ is a finite set and each disk $D_n(i)$ intersects only
two other disks of the form $D_n(j), j \in K_n$, and where none of the disks $D_n(i)$ intersects $C_k, k \ne n$.  Moreover, we assume that for any $m \ne n, m , n \ge 1$, the
families $\{D_m(i)\}_{i\in K_m}$ and $\{D_n(i)\}_{i \in K_n}$ consist of mutually disjoint disks.
Consider contraction  similarities
$\phi_{n, i}: X \to X,  i \in K_n, n \ge 0$ whose respective images of $X$ are the above disks
$D_n(i), i \in K_n, n \ge 0$. For this deterministic system, the boundary at infinity $\partial_\infty \mathcal{S}$ is contained in $\partial X$.
 
Assume now in addition, that there exists $\vp>0$, such that for $m \ne n$,  any disk $(1+\vp)D_n(i), i \in K_n$ does not intersect any disk of type $(1+\vp)D_m(j), j \in K_m$ \ (where in general for $\beta >0$, $\beta D_n(i)$ denotes the disk of the same center as $D_n(i)$ and radius equal to $\beta r_n'$), and that any disk $(1+\vp)D_n(i)$ intersects only two other disks $(1+\vp)D_n(j), j \in K_n$.

\fr We take now $\Lambda = [1-\vp, 1+\vp]$ and $\theta: \La \to \La$ a homeomorphism which preserves an absolutely continuous probability measure $m$ on $[1-\vp, 1+\vp]$. Let the following countable alphabet $$E = \{(n, i), \ i \in K_n, n \ge 0\},$$ which will be our alphabet. Consider also a fixed probability vector $P = (\nu_e)_{e \in E}$, and the associated Bernoulli probability $\nu = \nu_P$ on $E^\infty$, and  let us assume that $h(\nu) < \infty$. 

\fr We now define the conformal contraction
$\phi_{(n, i)}^\lambda(x)$, as being a similarity  with image $\phi_{(n, i)}^\lambda(X)$ equal to $\lambda D_n(i)$, for $i \in K_n, n \ge 0$ and $\lambda \in \La$; its contraction factor is equal to $\lambda r_n', n \ge 0$.

\fr Consider now the probability $\mu = m \times \nu$ defined on $\Lambda \times E^\infty$. We have constructed thus a random conformal infinite IFS with overlaps, denoted by $\cS$; and, from Remark \ref{finiteent} and since $h(\nu) < \infty$, we obtain also the finite entropy condition $H_\mu(\pi_{E^\infty}^{-1}\xi | \pi_\La^{-1}\epsilon_\La) < \infty$.

\fr The conditions in Theorem \ref{l3ie27} and Corollary \ref{ptwdim} are satisfied, and thus for Lebesgue-a.e $\lambda \in \Lambda$ and $\nu$-a.e $\omega \in E^\infty$, the projection measure $(\pi_\lambda)_*\mu_\lambda = \mu_\lambda \circ \pi_\lambda^{-1}$ on the non-compact limit set $J_\lambda:= \pi_\lambda(E^\infty)$ is exact dimensional, and its pointwise dimension is given by: $$d_{\mu_\lambda\circ \pi_\lambda^{-1}}(\pi_\lambda(\omega)) = \frac{h_\mu(\cS)}{\chi_\mu}, $$ where the Lyapunov exponent of $\mu$ with respect to the random system  $\cS$ is equal to: $$\chi_\mu = -\log \lambda - \sum_{e=(n, i) \in E}\nu_e \log r_n'>0$$

\fr From the construction of the disks $\lambda D_n(i), i \in K_n, n \ge 0, \lambda \in \La$ above,  we notice that the condition in Theorem \ref{ab}, part b) is satisfied with $k = 2$. Hence we can obtain a \textit{lower estimate} for the random projectional entropy of $\mu$, namely
$$h_\mu(\cS) \ge h(\mu) - \log 2 = h(m) - \sum_{e \in E} \nu_e \log \nu_e - \log 2$$
 Hence, by combining the last two displayed formulas and using Theorem \ref{ab}, we obtain:

\begin{cor}\label{4.3}
In the setting of 4.3, for $\mu$-almost every pair $(\lambda, \omega) \in \Lambda \times E^\infty$, the pointwise dimension of $\mu_\lambda\circ \pi_\lambda^{-1}$ satisfies the  following estimates:
\

$$\frac{h(m) - \mathop{\sum}\limits_{e \in E} \nu_e\log \nu_e - \log 2}{-\log \lambda - \mathop{\sum}\limits_{e=(n, i) \in E}\nu_e \log r_n'} \ \le  d_{\mu_\lambda\circ \pi_\lambda^{-1}}(\pi_\lambda(\omega)) \ \le \frac{h(m) - \mathop{\sum}\limits_{e \in E} \nu_e\log \nu_e}{-\log \lambda - \mathop{\sum}\limits_{e=(n, i) \in E}\nu_e \log r_n'}$$
\end{cor}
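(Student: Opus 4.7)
I would first verify that the system $\cS$ of Section 4.3 meets the hypotheses of Theorem~\ref{l3ie27} and Corollary~\ref{ptwdim}. The measure $\mu=m\times\nu$ is $(\th\times\sg)$-invariant with marginal $m$ on $\La$; each map $\phi_{(n,i)}^\lambda$ is a similarity, so the Bounded Distortion Property holds trivially; and Remark~\ref{finiteent}, together with $h(\nu)<\infty$, gives $\H_\mu\bigl(\pi_{E^\infty}^{-1}(\xi)\bigm|\pi_\La^{-1}(\e_\La)\bigr)<\infty$. Corollary~\ref{ptwdim} therefore yields, for $\mu$-a.e. $(\lambda,\omega)\in\La\times E^\infty$, the exact dimensionality identity $d_{\mu_\lambda\circ\pi_\lambda^{-1}}(\pi_\lambda(\omega))=h_\mu(\cS)/\chi_\mu$. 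Since $|(\phi_{(n,i)}^\lambda)'|=\lambda\, r_n'$ and $\mu$ is a product, a direct computation gives
\[
\chi_\mu=-\int_\La\log\lambda\,dm(\lambda)-\sum_{(n,i)\in E}\nu_{(n,i)}\log r_n'>0,
\]
which the statement of the corollary abbreviates as $-\log\lambda-\sum\nu_e\log r_n'$.

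The task then reduces to sandwiching $h_\mu(\cS)$. The upper bound $h_\mu(\cS)\le h(\mu)=h(m)-\sum_{e\in E}\nu_e\log\nu_e$ is an immediate application of part~(a) of Theorem~\ref{ab} combined with the product-entropy identity $h(m\times\nu)=h(m)+h(\nu)$. For the matching lower bound I would invoke part~(b) of Theorem~\ref{ab} with $k=2$: one must produce, for $\mu$-a.e. $(\lambda,\omega)$, a radius $r(\lambda,\omega)>0$ and two labels $e_1,e_2\in E$ such that every set $\phi_e^{\lambda'}(J_{\lambda'})$ meeting $B(\pi_\lambda(\omega),r(\lambda,\omega))$ has first letter $e\in\{e_1,e_2\}$. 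Writing $\omega_1=(n_1,i_1)$, we have $\pi_\lambda(\omega)\in\lambda D_{n_1}(i_1)\subset(1+\vp)D_{n_1}(i_1)$, and $\phi_{(n,i)}^{\lambda'}(J_{\lambda'})\subset\lambda' D_n(i)\subset(1+\vp)D_n(i)$ for all $\lambda'\in\La$. The cross-circle disjointness of the thickenings then rules out every label outside $\{(n_1,j):j\in K_{n_1}\}$, while the two-neighbor intersection hypothesis narrows the survivors to $(n_1,i_1)$ and its two cyclic neighbors.

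The geometric heart of the argument, which I expect to be the main obstacle, is to trim this list of three candidates down to $k=2$ for $\mu$-a.e. $(\lambda,\omega)$. The crucial observation is that in the chain-like covering of each circle $C_{n_1}$, the thickened disks $(1+\vp)D_{n_1}(j)$ overlap only pairwise in lens-shaped regions, so every point of the plane lies in at most two of them; one of the three candidate labels therefore fails automatically, and choosing $r(\lambda,\omega)$ smaller than the distance from $\pi_\lambda(\omega)$ to the remaining, non-overlapping thickened disk eliminates it from the ball. Granted this, Theorem~\ref{ab}(b) yields $h_\mu(\cS)\ge h(\mu)-\log 2=h(m)-\sum_{e\in E}\nu_e\log\nu_e-\log 2$, and dividing the resulting two-sided estimate on $h_\mu(\cS)$ by $\chi_\mu>0$ delivers the double inequality in the statement.
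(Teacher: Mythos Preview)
Your proposal is correct and follows the same route as the paper: verify the finite-entropy hypothesis via Remark~\ref{finiteent}, apply Corollary~\ref{ptwdim} to obtain $d_{\mu_\lambda\circ\pi_\lambda^{-1}}=h_\mu(\cS)/\chi_\mu$, compute $\chi_\mu$ directly from the similarity ratios $\lambda r_n'$, and then sandwich $h_\mu(\cS)$ using both parts of Theorem~\ref{ab} with $k=2$. The paper simply asserts that the hypothesis of Theorem~\ref{ab}(b) holds with $k=2$ ``from the construction of the disks'' without further justification, so your observation that the two thickened neighbors of $(1+\vp)D_{n_1}(i_1)$ are themselves disjoint---hence any point lies in at most two thickened disks, and a small enough ball avoids the third candidate label---actually supplies a detail the paper leaves implicit.
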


\

\

\end{document}